\documentclass[11pt]{amsart}
\usepackage{amssymb,amsmath,mathrsfs}
\usepackage{enumerate,tikz}
\usepackage{hyperref}

\usetikzlibrary{patterns}
\usetikzlibrary{decorations.pathreplacing, calligraphy}
\usetikzlibrary{decorations.markings}

\evensidemargin20pt
\oddsidemargin20pt
\textwidth6in

\newtheorem{thm}{Theorem}[section]
\newtheorem{prop}[thm]{Proposition}
\newtheorem{cor}[thm]{Corollary}
\newtheorem{lem}[thm]{Lemma}

\numberwithin{equation}{section}

\def\Gr{\mathscr{G}}
\def\V{\mathscr{V}}
\def\W{\mathscr{W}}
\def\P{\mathscr{P}}
\def\A{\mathcal{A}}
\def\B{\mathcal{B}}
\def\E{\mathcal{E}}

\def\abs#1{\lvert#1\rvert}
\DeclareMathOperator{\des}{des}
\DeclareMathOperator{\inv}{inv}
\DeclareMathOperator{\id}{id}

\def\R{\rule[-1ex]{0ex}{3.6ex}}
\def\arr{\arrow[line width=0.3mm]{stealth}}

\tikzstyle{mesh}=[pattern=north east lines, pattern color=gray!70, draw=gray]
\tikzstyle{meshPatt}=[pattern=north west lines, pattern color=black!30!green, draw=gray]
\tikzstyle{meshMost}=[pattern=crosshatch, pattern color=black!10!orange!70, draw=gray]

\newcommand\dyckpath[4]{
\def\diam{0.1}
\def\sf{{#4}}
  \begin{tikzpicture}[scale=\sf]
    \draw[help lines] (#1) -- ++(#2*2,0);
    \draw[line width=1pt] (#1) foreach \dir in {#3}{ -- ++(\dir*90-45:1.41)};
    \draw[fill] (#1) circle (\diam);
    \draw[fill] (#1) foreach \dir in {#3}{ ++(\dir*90-45:1.41) circle (\diam)};
  \end{tikzpicture}
}

\begin{document}
\title{Pattern-avoiding even and odd Grassmannian permutations}
\author[Juan Gil]{Juan B. Gil}
\address{Penn State Altoona\\ 3000 Ivyside Park\\ Altoona, PA 16601, U.S.A.}
\email{jgil@psu.edu}

\author[Jessica Tomasko]{Jessica A. Tomasko}
\address{Penn State Altoona\\ 3000 Ivyside Park\\ Altoona, PA 16601, U.S.A.}
\email{jat5880@psu.edu}

\begin{abstract}
In this paper, we investigate pattern avoidance of parity restricted (even or odd) Grassmannian permutations for patterns of sizes 3 and 4. We use a combination of direct counting and bijective techniques to provide recurrence relations, closed formulas, and generating functions for their corresponding enumerating sequences. In addition, we establish some connections to Dyck paths, directed multigraphs, weak compositions, and certain integer partitions.
\end{abstract}

\maketitle

\section{Introduction}

A {\em Grassmannian permutation} is a permutation having at most one descent. Note that a permutation $\pi$ is Grassmannian if and only if its reverse complement $\pi^{rc}$ is Grassmannian. We refer to the book by Kitaev \cite{Kitaev11} for basic definitions of permutations and pattern avoidance.

We let $\Gr_n$ denote the set of Grassmannian permutations on $[n]=\{1,\dots,n\}$. This set is in bijection with the set of Dyck paths of semilength $n$ having at most one long ascent.\footnote{An explicit bijection can be found in \cite[Section~4]{GiTo22a}.} We will call them {\em Grassmannian Dyck paths}. Further, we let $\Gr_n(\sigma)$ denote the set of elements in $\Gr_n$ that avoid the pattern $\sigma$, and let $\Gr^*_{n}(\sigma)=\Gr_{n}(\sigma)\backslash\{\id_n\}$. The enumeration of $\Gr_n(\sigma)$ for a pattern $\sigma$ of arbitrary size was studied in \cite{GiTo22a}. Clearly, $\Gr_n(\sigma)=\Gr_n$ whenever $\des(\sigma)>1$. If $\des(\sigma)=1$, we have the following general result.

\begin{thm}[{\cite[Thm.~3.1]{GiTo22a}}]\label{thm:GrassPerms}
If $k\ge 3$ and $\sigma\in S_k$ with $\des(\sigma)=1$, then
\[ \abs{\Gr_n(\sigma)}  = 1 + \sum_{j=3}^k\binom{n}{j-1} \text{ for } n\in\mathbb{N}. \]
\end{thm}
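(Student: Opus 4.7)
The plan is to reduce the counting to a combinatorial statement about binary strings and their (not necessarily contiguous) subwords. I would first parameterize $\Gr_n$ by subsets of $[n]$: for every non-identity $\pi\in\Gr_n$ with descent position $d$, the set $S_\pi=\{\pi_1,\ldots,\pi_d\}$ is not an initial segment $\{1,\ldots,d\}$, and conversely each non-initial-segment subset $S\subseteq[n]$ determines a unique non-identity $\pi_S\in\Gr_n$; the $n+1$ initial segments all represent $\id_n$. Let $s$ be the descent position of $\sigma$, set $A_\sigma=\{\sigma_1,\ldots,\sigma_s\}\subseteq[k]$, and write $\epsilon^\sigma\in\{0,1\}^k$ and $\epsilon(S)\in\{0,1\}^n$ for the characteristic vectors of $A_\sigma$ and $S$. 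Because both $\pi_S$ and $\sigma$ have a single descent, any occurrence $i_1<\cdots<i_k$ of $\sigma$ in $\pi_S$ must align the two descents, i.e., $i_s\le d<i_{s+1}$. Tracking the pattern condition through the sorting of the two halves of $\pi_S$ then yields the equivalence
\[
\pi_S\text{ contains }\sigma \iff \epsilon^\sigma\text{ is a (not necessarily contiguous) subword of }\epsilon(S).
\]
Moreover, $\des(\sigma)=1$ is equivalent to $\epsilon^\sigma$ containing ``$01$'' as a subword, that is, $\epsilon^\sigma$ is not of the form $1^s0^{k-s}$.

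The crux would then be the following uniform-count lemma: \emph{for every $\epsilon\in\{0,1\}^k$ containing ``$01$'' as a subword, the number of subsets $S\subseteq[n]$ for which $\epsilon$ is not a subword of $\epsilon(S)$ is $\sum_{j=0}^{k-1}\binom{n}{j}$.} I would prove this by induction on $k$, with the base case $k=2$ (where $\epsilon=01$ and the avoiding $S$'s are exactly the initial segments of $[n]$) verified by direct count. For the induction step I would peel off the last bit of $\epsilon$ and split the count of $S$ according to whether $n\in S$, reducing each sub-case to the lemma for a binary string of length $k-1$ and combining via Pascal's relation. The principal obstacle is showing that the count depends only on $k$ and not on the particular shape of $\epsilon$: this is a Wilf-equivalence phenomenon at the level of binary strings, and the case analysis in the induction naturally produces expressions of different shapes that must be reconciled.

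Granting the lemma, the theorem follows by direct bookkeeping. The $n+1$ initial-segment subsets all represent $\id_n$ (a single permutation, which avoids $\sigma$), while each non-initial-segment $S$ gives a distinct non-identity $\pi_S$ that avoids $\sigma$ precisely when $\epsilon^\sigma$ is not a subword of $\epsilon(S)$. Hence
\[
|\Gr_n(\sigma)|=1+\Bigl(\sum_{j=0}^{k-1}\binom{n}{j}-(n+1)\Bigr)=1+\sum_{j=2}^{k-1}\binom{n}{j}=1+\sum_{j=3}^{k}\binom{n}{j-1}.
\]
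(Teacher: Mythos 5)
Your proposal is correct in substance, but note that this paper does not actually prove Theorem~\ref{thm:GrassPerms}: it is quoted from \cite[Thm.~3.1]{GiTo22a}, so there is no in-paper proof to match; what you give is a self-contained argument, and a nice one. Your reduction is sound: encoding a non-identity $\pi\in\Gr_n$ by the set $S$ of entries before its descent, and checking that an occurrence of a one-descent pattern $\sigma$ must straddle the unique descent of $\pi_S$, does show that $\pi_S$ contains $\sigma$ exactly when the characteristic word $\epsilon^\sigma$ of $\{\sigma_1,\dots,\sigma_s\}$ occurs as a (scattered) subword of $\epsilon(S)$; the bookkeeping at the end, including the fact that the $n+1$ initial segments avoid because $\epsilon^\sigma$ contains $01$ while $1^d0^{n-d}$ does not, is also right. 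Your key lemma is a classical fact: the number of binary words of length $n$ avoiding a fixed word of length $k$ as a subsequence is $\sum_{j=0}^{k-1}\binom{n}{j}$, independent of the word. The authors' cited proof proceeds instead by analyzing the structure of a $\sigma$-containing Grassmannian permutation directly (choosing the values that realize the pattern), so your route through supersequence counting is genuinely different and arguably more modular, since the binary-word lemma isolates the Wilf-equivalence phenomenon cleanly.

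One wrinkle you should fix when writing this up: as stated, your lemma is restricted to words $\epsilon$ containing $01$, but that class is not closed under the peeling step of your induction (e.g.\ removing the last bit of $1101$ leaves $110$), so the induction hypothesis must be the unrestricted statement for all $\epsilon\in\{0,1\}^{k-1}$ --- which is true and is what you should prove. The cleanest version is a double induction on $k$ and $n$: splitting length-$n$ avoiders of $\epsilon$ according to whether the last letter equals $\epsilon_k$ gives
\begin{equation*}
 f(\epsilon,n)=f(\epsilon,n-1)+f(\epsilon',n-1),
\end{equation*}
where $\epsilon'$ is $\epsilon$ with its last letter deleted, and Pascal's rule then yields $f(\epsilon,n)=\sum_{j=0}^{k-1}\binom{n}{j}$ uniformly, with base case $k=1$ (only the constant word in the other letter avoids). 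With that adjustment the argument is complete.
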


In this paper, we will study the subset of pattern-avoiding Grassmannian permutations with an additional parity restriction. Recall that a permutation is said to be {\em even} if it has an even number of inversions (occurrences of the pattern 21); otherwise the permutation is said to be {\em odd}. As discussed in \cite[Remark~5.2]{GiTo22a},
\[ \abs{\Gr^{even}_n} = 2^{n-1} + 2^{\lfloor \frac{n-1}{2}\rfloor} - n \;\text{ and }\; \abs{\Gr^{odd}_n} = 2^{n-1} - 2^{\lfloor \frac{n-1}{2}\rfloor}. \]

\begin{prop}[{\cite[Prop.~5.1]{GiTo22a}}]
The set $\Gr^{odd}_n$ is in bijection to the set of Grassmannian Dyck paths of semilength $n$ having an odd number of peaks at even height. On the other hand, the elements of $\Gr^{even}_n$ correspond to Grassmannian Dyck paths with an even number of peaks at even height.
\end{prop}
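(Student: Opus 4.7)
The plan is to apply the bijection in~\cite[Section~4]{GiTo22a} between $\Gr_n$ and the set of Grassmannian Dyck paths of semilength~$n$, and to show that under this bijection the parity of $\inv(\pi)$ equals the parity of the number of peaks at even height of the associated path. Since $\pi$ is even (respectively, odd) if and only if $\inv(\pi)$ is even (respectively, odd), this parity identity immediately yields both statements of the proposition.

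To set up the comparison, observe that every non-identity $\pi \in \Gr_n$ is uniquely determined by its descent position $k$ together with the subset $S = \{\pi(1), \dots, \pi(k)\} \subsetneq [n]$, since the prefix and suffix of $\pi$ are both increasing. The inversions of $\pi$ are then exactly the pairs $(s,t)$ with $s \in S$, $t \in [n]\setminus S$, and $s > t$, so
\[ \inv(\pi) = \abs{\{(s,t) \in S \times ([n]\setminus S) : s > t\}}. \]
I would then use the explicit description of the bijection from~\cite{GiTo22a} to read off, in terms of $S$, the positions and heights of all peaks of the associated Grassmannian Dyck path.

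The technical heart of the argument is a parity-tracking calculation. Every Grassmannian Dyck path decomposes into an initial run of isolated peaks at height~$1$, at most one long ascent, and a descent phase interspersed with isolated peaks at various heights, possibly followed by a trailing run of peaks at height~$1$. The initial and trailing height-$1$ peaks contribute nothing to either statistic modulo~$2$, which reduces the claim to paths consisting of a single long ascent followed by a descent phase with interspersed isolated peaks. The main obstacle is then to verify, block by block, that each isolated peak inserted at even height in the descent phase contributes an odd number of pairs to the inversion count while each peak at odd height contributes an even number. I plan to prove this by induction on the number of peaks in the descent phase, tracking how inserting a new peak at a prescribed height modifies the subset $S$ (and hence $\inv(\pi) \bmod 2$), with the small cases checked by direct computation.
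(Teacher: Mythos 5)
A preliminary remark on the comparison itself: this paper never proves the proposition --- it is imported verbatim from \cite[Prop.~5.1]{GiTo22a} --- so the only proof your plan can be measured against is the one in that reference, which, like your plan, works through the Section~4 bijection and tracks the parity of $\inv$. Your setup is correct: a non-identity $\pi\in\Gr_n$ is determined by its descent position and the set $S$ of prefix values, and $\inv(\pi)=\abs{\{(s,t)\in S\times([n]\setminus S):\,s>t\}}$; showing that this parity equals the parity of the number of even-height peaks is exactly the right goal.

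The gap is that the entire content of the proposition is the step you defer: you never extract from the bijection the description of the peaks in terms of $S$, and the ``block by block'' induction is only promised, not carried out. In fact, once the bijection is made explicit (and every pictured example in this paper confirms this), each entry $t\notin S$ corresponds to a peak of height $1+\#\{s\in S:\,s>t\}$, while the prefix entries having no element of $[n]\setminus S$ below them account for the initial height-one peaks. Hence $\inv(\pi)$ equals the sum over all peaks of $(\text{height}-1)$, and reducing mod $2$ immediately identifies the parity of $\inv(\pi)$ with the number of peaks at even height --- no induction is needed. Your proposed local claim (an even-height peak contributes an odd number of inverted pairs, an odd-height peak an even number) is true in precisely this form, but it is asserted rather than verified; moreover, as phrased it covers only the ``isolated peaks in the descent phase'' and omits the peak at the top of the long ascent, which also contributes (namely $k-p$ inversions, where $k$ is the descent position and $p$ the number of initial fixed points) and whose height parity must be matched by the same bookkeeping. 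Until the peak-height description of the bijection is actually established, what you have is a plausible outline rather than a proof.
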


As it turns out, the enumeration of  $\Gr^{odd}_n(\sigma)$ (or $\Gr^{even}_n(\sigma)$) leads to multiple Wilf equivalence classes. Because of Theorem~\ref{thm:GrassPerms}, it suffices to only enumerate the odd or the even pattern-avoiding Grassmannian permutations. We will focus on odd permutations and will prove that if $\des(\sigma)=1$, there are two equivalence classes for $\Gr^{odd}_n(\sigma)$ when $|\sigma|=3$ (Table~\ref{tab:size3patt}) and five equivalence classes when $|\sigma|=4$ (Table~\ref{tab:size4patt}). For patterns of sizes 5, 6, and 7, there appear to be three, seven, and four Wilf equivalence classes, respectively. 

\begin{table}[ht]
\small
\def\R{\rule[-1.6ex]{0ex}{4.4ex}}
\begin{tabular}{|c|l|c|c|c|} \hline
\R Pattern $\sigma$ & \hspace{45pt} $|\Gr^{odd}_n(\sigma)|$ & Gen.\ function & OEIS \\[2pt] \hline
\R 123 & 0, 1, 2, 1, 0, 0, \dots && \\ \hline
\rule[-2.6ex]{0ex}{6.4ex} \parbox[m]{4.5ex}{132\\ 213} & 0, 1, 1, 3, 3, 6, 6, 10, 10, 15, 15, 21, \dots & $\frac{x^2}{(1-x)^3(1+x)^2}$ & A008805 \\ \hline
\rule[-2.6ex]{0ex}{6.4ex} \parbox[m]{4,5ex}{231\\ 312} & 0, 1, 2, 4, 6, 9, 12, 16, 20, 25, 30, 36, \dots & $\frac{x^2}{(1-x)^3(1+x)}$ & A002620 \\ \hline\hline
\R & \hspace{45pt} $|\Gr^{even}_n(\sigma)|$ & & \\[2pt] \hline
\R 123 & 1, 1, 2, 1, 0, 0, \dots && \\ \hline
\rule[-2.6ex]{0ex}{6.4ex} \parbox[m]{4.5ex}{132\\ 213} & 1, 1, 3, 4, 8, 10, 16, 19, 27, 31, 41, 46, \dots & $\frac{x + x^4 + x^5}{(1-x)^3(1+x)^2}$ & A131355 \\ \hline
\rule[-2.6ex]{0ex}{6.4ex} \parbox[m]{4.5ex}{231\\ 312} & 1, 1, 2, 3, 5, 7, 10, 13, 17, 21, 26, 31, \dots & $\frac{x - x^2 + x^4}{(1-x)^3(1+x)}$ & A033638 \\ \hline
\end{tabular}
\bigskip
\caption{} \label{tab:size3patt}
\end{table}

\begin{table}[ht]
\small
\def\R{\rule[-1.6ex]{0ex}{4.6ex}}
\begin{tabular}{|c|l|c|c|c|} \hline
\R Pattern $\sigma$ & \hspace{45pt} $|\Gr^{odd}_n(\sigma)|$ & Gen.\ function & OEIS \\[2pt] \hline
\R 1234 & 0, 1, 2, 6, 4, 3, 0, 0, \dots && \\ \hline
\rule[-2.5ex]{0ex}{6.4ex} \parbox[m]{11.5ex}{1243, 2134\\ 2341, 4123} & 0, 1, 2, 5, 9, 16, 25, 38, 54, 75, \dots & $\frac{x^2(1+x^3)}{(1-x)^4(1+x)^2}$ & A175287 \\ \hline
\R 1324 & 0, 1, 2, 5, 8, 16, 20, 38, 40, 75, \dots & $\frac{x^2(1+2x+x^2+2x^4)}{(1-x)^4(1+x)^4}$ & A361270 \\ \hline
\R 1342, 3124 & 0, 1, 2, 6, 9, 19, 25, 44, 54, 85, \dots & $\frac{x^2(1+x+x^2+x^4)}{(1-x)^4(1+x)^3}$ & A361271 \\ \hline
\rule[-2.6ex]{0ex}{6.4ex} \parbox[m]{11.5ex}{1423, 2314\\ \hspace*{2ex} 3412} & 0, 1, 2, 6, 10, 19, 28, 44, 60, 85, \dots & $\frac{x^2(1+x^2)}{(1-x)^4(1+x)^2}$ & A005993 \\ \hline
\R 2413 & 0, 1, 2, 5, 8, 14, 20, 30, 40, 55, 70, \dots & $\frac{x^2}{(1-x)^4(1+x)^2}$ & A006918 \\ \hline\hline
\R & \hspace{45pt} $|\Gr^{even}_n(\sigma)|$ & & \\[2pt] \hline
\R 1234 & 1, 1, 3, 5, 6, 2, 0, 0, \dots && \\ \hline
\rule[-2.5ex]{0ex}{6.4ex} \parbox[m]{11.5ex}{1243, 2134\\ 2341, 4123} & 1, 1, 3, 6, 12, 20, 32, 47, 67, 91, \dots & $\frac{x-2x^2+2x^3+x^4-x^5}{(1-x)^4(1+x)}$ & A361272 \\ \hline
\R 1324 & 1, 1, 3, 6, 13, 20, 37, 47, 81, 91, \dots & $\frac{x+x^2-x^3+2x^4+7x^5+2x^6-x^7-x^8}{(1-x)^4(1+x)^4}$ & A361273 \\ \hline
\R 1342, 3124 & 1, 1, 3, 5, 12, 17, 32, 41, 67, 81, \dots & $\frac{x-x^3+2x^4+4x^5-x^6-x^7}{(1-x)^4(1+x)^3}$ & A361274 \\ \hline
\rule[-2.6ex]{0ex}{6.4ex} \parbox[m]{11.5ex}{1423, 2314\\ \hspace*{2ex} 3412} & 1, 1, 3, 5, 11, 17, 29, 41, 61, 81, \dots & $\frac{x-x^2+2x^4+x^5-x^6}{(1-x)^4(1+x)^2}$ & A361275 \\ \hline
\R 2413 & 1, 1, 3, 6, 13, 22, 37, 55, 81, 111, \dots & $\frac{x-x^2+3x^4+x^5-x^6}{(1-x)^4(1+x)^2}$ & A361276 \\ \hline
\end{tabular}
\bigskip
\caption{} \label{tab:size4patt}
\end{table}

The paper is structured as follows. In Section~\ref{sec:size3patts}, we warm up with the avoidance of patterns of size 3. We give closed formulas for the two nontrivial Wilf equivalence classes and give simple combinatorial interpretations in terms of Grassmannian Dyck paths. In Section~\ref{sec:size4patts}, we discuss patterns of size 4 with exactly one descent and use bijective methods to derive closed formulas and recurrence relations for the enumeration of the corresponding pattern-avoiding Grassmannian permutations. Finally, in Section~\ref{sec:bijections}, we conclude with  interesting combinatorial interpretations for $\Gr^{odd}_n(2413)$ and $\Gr^{odd}_n(3412)$. We give a bijection between the first set and the set of integer partitions of $n+2$ having Durfee square\footnote{Largest square that is contained within the partition's Ferrers diagram, see \cite{StanEC1}, \cite{SylvFran}.} of size 2. In addition, for $n\ge 2$, we provide a bijection between $\Gr^{odd}_n(3412)$ and the set of multidigraphs on two nodes having $n-2$ edges.

Regarding future research directions, we believe that the techniques used here and in our previous paper \cite{GiTo22a} can be applied to discuss parity restricted Grassmannian permutations avoiding patterns of larger sizes. Studying such permutations might shed some light into a possible unifying approach for their enumeration.

\section{Patterns of size 3}
\label{sec:size3patts}

We start by noting that, by means of the reverse complement map, we have 
\[ \abs{\Gr^{odd}_{n}(132)} = \abs{\Gr^{odd}_{n}(213)} \;\text{ and }\; \abs{\Gr^{odd}_{n}(231)} = \abs{\Gr^{odd}_{n}(312)}. \]
We will provide closed formulas for $\abs{\Gr^{odd}_{n}(132)}$ and $\abs{\Gr^{odd}_{n}(312)}$, as well as interpretations for all four patterns in terms of Grassmannian Dyck paths. 

\begin{thm} \label{thm:132avoiders} 
For $n\ge 2$ we have
\begin{equation*}
  \abs{\Gr^{odd}_{n}(132)} = \binom{\lfloor\frac{n}{2}\rfloor+1}{2}.
\end{equation*}
\end{thm}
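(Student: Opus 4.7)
The plan is to pin down the explicit structure of the permutations in $\Gr_n(132)$, read off their inversion count in closed form, and then sieve by parity.

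\textbf{Step 1 (structure).} A Grassmannian permutation $\pi$ with descent at position $k$ is determined by the set $A=\{\pi(1),\ldots,\pi(k)\}$: the first $k$ entries list $A$ in increasing order and the remaining entries list $[n]\setminus A$ in increasing order. I would prove that such a $\pi$ avoids $132$ if and only if $A$ is an interval $\{c,c+1,\ldots,c+k-1\}$ for some $c$. The nontrivial direction: if $A$ is not an interval, there exists $b\notin A$ with $\min A<b<\max A$, and the positions of $\min A$, $\max A$, $b$ produce a $132$. Conversely, if $A$ is an interval, every $b\notin A$ lies outside $[\min A,\max A]$, which rules out the only possible $132$ shape (two positions in the first run and one in the second; the remaining position patterns cannot produce $132$ because both runs are increasing). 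A genuine descent occurs precisely when $c\ge 2$, so the admissible parameters are $1\le k\le n-1$ and $2\le c\le n-k+1$, and the resulting permutations are automatically non-identity.

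\textbf{Step 2 (inversions and parity).} Writing the permutation explicitly as $(c,c+1,\ldots,c+k-1,1,2,\ldots,c-1,c+k,\ldots,n)$ and counting the inversions between the two runs yields
\[
\inv(\pi)=k(c-1),
\]
since each of the $k$ values in $A$ forms an inversion with exactly the $c-1$ values $1,\ldots,c-1$ of the second run. Hence $\pi$ is odd precisely when $k$ is odd and $c$ is even.

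\textbf{Step 3 (enumeration).} Setting $k=2j-1$ with $1\le j\le \lfloor n/2\rfloor$, the number of even values of $c$ in $[2,n-k+1]$ simplifies to $\lfloor n/2\rfloor-j+1$. Summing over $j$ produces the triangular number $\binom{\lfloor n/2\rfloor+1}{2}$, as required.

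The substance of the argument is concentrated in Step 1; Steps 2 and 3 are direct verifications. I therefore expect the (mild) main obstacle to be the structural characterization of $132$-avoiding Grassmannian permutations as those whose first-run value set is an interval.
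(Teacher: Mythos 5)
Your proposal is correct and follows essentially the same route as the paper: the same structural description (the pre-descent block is an interval of values, so the permutation is $(c,\dots,c+k-1,1,\dots,c-1,c+k,\dots,n)$), the same parity criterion $\inv(\pi)=k(c-1)$ odd, and then a count of the admissible pairs $(k,c)$. The only differences are cosmetic: you justify the structural claim that the paper merely asserts, and you sum over both parameters uniformly instead of the paper's split into $n$ even (direct count) and $n$ odd (append $n$ to reduce to $n-1$).
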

\begin{proof}
Excluding the identity, any Grassmannian permutation that avoids $132$ must be of the form
\begin{equation*}
\underbrace{(i+1) \cdots m}_{\tau_{1}}\, \underbrace{\!\phantom{(}1\cdots i \phantom{)}\!}_{\tau_{2}}\, \underbrace{(m+1) \cdots n}_{\tau_{3}} \;\text{ with } i,m\in\{1,\dots,n\},\; i<m,
\end{equation*}
where $\tau_{3} = \varepsilon$ (the empty word) if $m = n$. Note that $m1$ forms the unique descent of the permutation. The permutation is odd if and only if the product $|\tau_1| \cdot |\tau_2|$ is odd. This implies $m$ is even and $i$ is odd.

If $n=2k$, we can construct the elements of $\Gr^{odd}_{n}(132)$ by placing $m \in \{2, 4, \dots,2k\}$ at position $d \in \{1,3,\dots,2k-1\}$, with the restriction that $m>d$. Thus, there are 
\[ k + (k-1) + \dots + 2 + 1 = \frac{k(k+1)}{2} = \binom{\frac{n}{2}+1}{2} \] 
even-sized such permutations. Alternatively, note that since $m$ is even and $i$ is odd, every permutation in $\Gr^{odd}_{n}(132)$ corresponds to a choice of two elements $i' < m'$ from $[\frac{n}{2} + 1]$, where $i = 2i'-1$ and $m = 2m'-2$.

If $n$ is odd, the entry $n$ cannot be part of the descent, so every element of $\Gr^{odd}_{n}(132)$ can be created by appending $n$ at the end of a corresponding permutation of size $n-1$. Thus,
\begin{equation*}
\abs{\Gr^{odd}_{n}(132)} =
\begin{cases}
\binom{\frac{n}{2}+1}{2} & \text{if $n$ is even,} \\[4pt]
\abs{\Gr^{odd}_{n-1}(132)} & \text{if $n$ is odd,}
\end{cases}
\end{equation*}
which gives the claimed formula.
\end{proof}

The following two propositions are immediate consequences of the bijections discussed in \cite[Prop.~4.1 \& Prop.~5.1]{GiTo22a}.

\begin{prop} \label{prop:132dyckpaths}
The elements of $\Gr^{odd}_{n}(132)$ are in one-to-one correspondence with the odd Grassmannian Dyck paths of semilength $n$ that begin with a long ascent and have exactly one long descent.
\end{prop}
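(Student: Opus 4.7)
The proof is a direct application of two bijections already established in~\cite{GiTo22a}: \cite[Prop.~4.1]{GiTo22a} provides an explicit bijection between $\Gr_n$ and the set of Grassmannian Dyck paths of semilength $n$, while \cite[Prop.~5.1]{GiTo22a} shows that under this bijection the odd Grassmannian permutations correspond to Grassmannian Dyck paths with an odd number of peaks at even height. The plan is to translate the explicit block description of $132$-avoiders obtained in the proof of Theorem~\ref{thm:132avoiders} through these two bijections.

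First, I would recall that every non-identity $132$-avoider in $\Gr_n$ has the form $(i+1,\ldots,m,\,1,\ldots,i,\,m+1,\ldots,n)$ with $1\le i<m\le n$, and therefore consists of a single initial ascending run, followed by a unique large drop from $m$ down to $1$, followed by an increasing tail. Under the bijection of \cite[Prop.~4.1]{GiTo22a}, the initial ascending run of the permutation is precisely what produces the opening $U$-run of the Dyck path; since this opening run has length at least $2$, it is a long ascent. The single large drop, together with the fully increasing tail, then forces exactly one long $D$-run and leaves only unit $UD$ peaks at the baseline after it. Conversely, a Grassmannian Dyck path that begins with a long ascent and contains exactly one long descent is uniquely determined by the common length of these two runs together with the position of the long descent, and this data recovers exactly the pair $(i,m)$ parameterizing a $132$-avoider.

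Combining this structural correspondence with the parity statement of \cite[Prop.~5.1]{GiTo22a} then immediately yields the claimed bijection between $\Gr^{odd}_n(132)$ and the odd Grassmannian Dyck paths of the prescribed shape. The main subtlety is verifying that the bijection of \cite[Prop.~4.1]{GiTo22a} really transports the initial ascending run of $\pi$ to the opening $U$-run of the corresponding Dyck path; this is where essentially all the work lies, and it can be read off directly from the explicit construction given in \cite[Section~4]{GiTo22a}.
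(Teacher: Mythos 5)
Your proposal follows essentially the same route as the paper, which simply observes that the statement is an immediate consequence of the bijections in \cite[Prop.~4.1 \& Prop.~5.1]{GiTo22a} applied to the block form $(i+1,\dots,m,\,1,\dots,i,\,m+1,\dots,n)$ of $132$-avoiders; your added structural details (opening long ascent, unique long descent of the same length, unit peaks elsewhere) are consistent with that bijection, as the examples in Figure~1 confirm. One small caution: under the bijection the opening $U$-run has length $m-i+1$, one more than the initial ascending run $\tau_1$ of the permutation, so the correspondence of runs is not length-preserving verbatim, but this does not affect the argument you defer to \cite[Section~4]{GiTo22a}.
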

\def\eps{0.25}
\begin{figure}[ht]
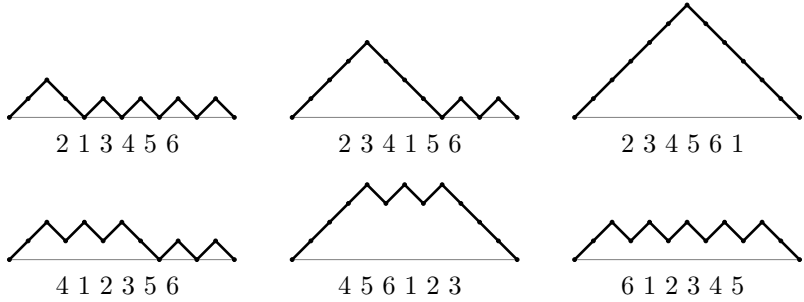

\small
\begin{tabular}{ccc}
\dyckpath{0,0}{6}{1,1,0,0,1,0,1,0,1,0,1,0}{\eps} & \dyckpath{0,0}{6}{1,1,1,1,0,0,0,0,1,0,1,0}{\eps} & \dyckpath{0,0}{6}{1,1,1,1,1,1,0,0,0,0,0,0}{\eps} \\
2 1 3 4 5 6 & 2 3 4 1 5 6 & 2 3 4 5 6 1 \\[8pt]
\dyckpath{0,0}{6}{1,1,0,1,0,1,0,0,1,0,1,0}{\eps} & \dyckpath{0,0}{6}{1,1,1,1,0,1,0,1,0,0,0,0}{\eps} & \dyckpath{0,0}{6}{1,1,0,1,0,1,0,1,0,1,0,0}{\eps} \\
  4 1 2 3 5 6 & 4 5 6 1 2 3 & 6 1 2 3 4 5
\end{tabular}
\caption{Elements of $\Gr^{odd}_6(132)$ and their Dyck paths.}
\end{figure}

\begin{prop} \label{prop:213dyckpaths}
The elements of $\Gr^{odd}_{n}(213)$ are in one-to-one correspondence with the odd Grassmannian Dyck paths of semilength $n$ that have exactly one long ascent and one long descent, with no peak after the long descent.
\end{prop}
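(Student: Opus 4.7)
The plan is to invoke the bijection between $\Gr_n$ and Grassmannian Dyck paths from \cite[Prop.~4.1]{GiTo22a}, together with the parity criterion of \cite[Prop.~5.1]{GiTo22a} (namely, $\pi$ is odd iff the corresponding path has an odd number of peaks at even height), and then simply identify the image of $\Gr^{odd}_n(213)$.

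\emph{Step 1: Structure of $\Gr_n(213)$.} Since the reverse-complement involution interchanges 132 and 213, applying it to the explicit shape of 132-avoiders exhibited in the proof of Theorem~\ref{thm:132avoiders} shows that every nontrivial $\pi \in \Gr_n(213)$ has the form
$$\pi = 1,\, 2,\, \ldots,\, a,\; b+1,\, b+2,\, \ldots,\, n,\; a+1,\, a+2,\, \ldots,\, b$$
for some integers $0 \le a < b \le n-1$; a direct count gives $\inv(\pi) = (n-b)(b-a)$.

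\emph{Step 2: The associated Dyck path.} Pushing such a $\pi$ through the bijection of \cite[Prop.~4.1]{GiTo22a} yields (block by block) a Dyck path of the form
$$(\U\D)^{a}\, \U^{k}\, (\D\U)^{q}\, \D^{k},$$
with $k = b-a+1 \ge 2$ and $q = n-b-1 \ge 0$. The three structural claims can now be read off directly: $\U^{k}$ is the only long ascent, $\D^{k}$ is the only long descent, and placing $\D^{k}$ at the very end leaves no room for any peak after it. For the converse direction, any Grassmannian Dyck path with exactly one long ascent, one long descent, and no peak after the long descent must have its long descent at the very end (any later $\U$ would force another peak), and the prohibition of additional long ascents or descents then forces the rest of the path to alternate, yielding precisely a decomposition $(\U\D)^{a}\, \U^{k}\, (\D\U)^{q}\, \D^{k}$. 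Hence the bijection of Step 2 is onto the stated class of paths.

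\emph{Step 3: Parity matching.} In $(\U\D)^{a}\, \U^{k}\, (\D\U)^{q}\, \D^{k}$, the $a$ peaks of the initial alternating block all sit at height $1$, while the $q+1$ peaks of the central pyramid all sit at height $k$. Thus the number of peaks at even height is $q+1 = n-b$ when $k$ is even, and $0$ otherwise; this is odd exactly when both $b-a$ and $n-b$ are odd, matching the formula $\inv(\pi) = (n-b)(b-a)$ from Step 1. Invoking \cite[Prop.~5.1]{GiTo22a}, the bijection of Step 2 restricts to the desired bijection between $\Gr^{odd}_n(213)$ and the specified class of odd Grassmannian Dyck paths.

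The main obstacle is the verification in Step 2 that the bijection of \cite[Prop.~4.1]{GiTo22a} sends the three blocks of $\pi$ to the three segments $(\U\D)^{a}$, $\U^{k}(\D\U)^{q}$, and $\D^{k}$, respectively. This is a routine but slightly tedious block-by-block check from the explicit construction in \cite{GiTo22a}; once it is in hand, the structural properties and the parity match follow by inspection.
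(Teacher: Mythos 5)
Your overall strategy---classify the $213$-avoiders, send them to paths of the shape $(\U\D)^a\,\U^k(\D\U)^q\,\D^k$, check that this shape is exactly the stated path class, and match parities via the peaks-at-even-height criterion of \cite[Prop.~5.1]{GiTo22a}---is precisely the route the paper intends (the paper itself offers only a pointer to the bijections of \cite{GiTo22a}). Step 1, the converse characterization of the path class in Step 2 (where you should say explicitly that step balance forces the long ascent and long descent to have the same length $k$), and the parity count in Step 3 are all correct.

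The one genuine problem is the claim you yourself flag as the ``main obstacle'' and leave unverified: that the bijection of \cite[Prop.~4.1]{GiTo22a} sends $\pi=1,\dots,a,\;b+1,\dots,n,\;a+1,\dots,b$ to $(\U\D)^a\,\U^{k}(\D\U)^{q}\,\D^{k}$ with $k=b-a+1$ and $q=n-b-1$. The paper's own figure of $\Gr^{odd}_6(213)$ contradicts this assignment: there $124563$ (so $a=2$, $b=3$) is paired with $\U\D\U\D\,\U^4\D^4$, while $126345$ ($a=2$, $b=5$) is paired with $\U\D\U\D\,\U^2\D\U\D\U\D^2$; that is, the actual bijection uses $k=n-b+1$ and $q=b-a-1$, with the roles of the two blocks swapped relative to your formula. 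So the deferred ``routine block-by-block check'' would fail as stated. Fortunately this does not damage the result: the proposition only asserts \emph{some} one-to-one correspondence, your assignment $(a,b)\mapsto(a,k,q)$ is itself a bijection onto the stated path class, and the parity criterion (odd number of peaks at even height $\iff$ $k$ even and $q$ even) is symmetric in the two factors $n-b$ and $b-a$ of $\inv(\pi)=(n-b)(b-a)$, so either assignment carries odd permutations exactly onto odd paths. Repair it either by presenting your map as a standalone bijection (dropping the claim that it is the map of \cite[Prop.~4.1]{GiTo22a}), or by redoing Steps 2--3 with $k=n-b+1$, $q=b-a-1$; the conclusion is unchanged in both cases.
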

\begin{figure}[ht]
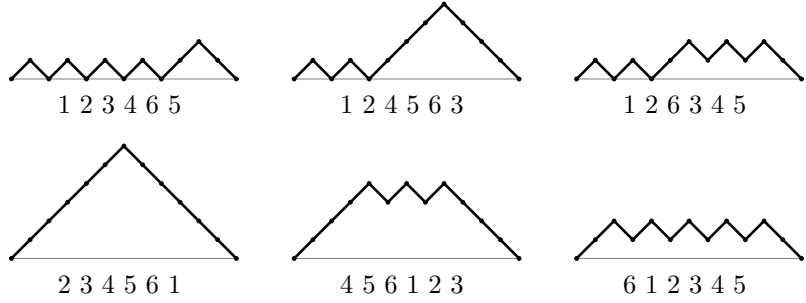

\small
\begin{tabular}{ccc}
\dyckpath{0,0}{6}{1,0,1,0,1,0,1,0,1,1,0,0}{\eps} & \dyckpath{0,0}{6}{1,0,1,0,1,1,1,1,0,0,0,0}{\eps} & \dyckpath{0,0}{6}{1,0,1,0,1,1,0,1,0,1,0,0}{\eps} \\
1 2 3 4 6 5 & 1 2 4 5 6 3 & 1 2 6 3 4 5 \\[8pt]
\dyckpath{0,0}{6}{1,1,1,1,1,1,0,0,0,0,0,0}{\eps} & \dyckpath{0,0}{6}{1,1,1,1,0,1,0,1,0,0,0,0}{\eps} & \dyckpath{0,0}{6}{1,1,0,1,0,1,0,1,0,1,0,0}{\eps} \\
  2 3 4 5 6 1 & 4 5 6 1 2 3 & 6 1 2 3 4 5
\end{tabular}
\caption{Elements of $\Gr^{odd}_6(213)$ and their Dyck paths.}
\end{figure}

\begin{thm}\label{thm:312avoiders} 
For $n\ge 2$ we have
\begin{equation*}
  \abs{\Gr^{odd}_{n}(312)} = \binom{\lfloor\frac{n-1}{2}\rfloor+1}{2} + \binom{\lfloor\frac{n}{2}\rfloor+1}{2}.
\end{equation*}
\end{thm}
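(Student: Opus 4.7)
The plan is to mirror the strategy of Theorem~\ref{thm:132avoiders}: first pin down the structural form of a Grassmannian permutation that avoids $312$, then read off its inversion count, and finally count the parameter pairs that produce an odd value.

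First I would show that every non-identity $\pi \in \Gr_n(312)$ has the form
\[ \pi = 1,2,\dots,i-1,i+1,\dots,m,\, i,\, m+1,m+2,\dots,n \]
for some $1 \le i < m \le n$. Writing the descent of $\pi$ between positions $d$ and $d+1$ with $\pi_d = m$ and $\pi_{d+1}$ the entry just after the descent, $312$-avoidance applied to the triple $(d,d+1,d+2)$ (whenever that position exists) forces $\pi_{d+2} > \pi_d$, so by monotonicity of the ascending run every entry of the second block from position $d+2$ onward exceeds $m$. A counting argument comparing the $m$ values $\le m$ with the $d+1$ positions that can hold them then forces $d = m-1$, so $\pi_{d+1}$ is the unique missing element $i$ of $\{1,\dots,m\}$ and the tail is forced to be $m+1,\dots,n$.

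Second I would count inversions. The only inversions involve the entry $i$ at position $m$ paired with the entries $i+1,\dots,m$ sitting in positions $i,i+1,\dots,m-1$, giving $\inv(\pi) = m-i$. Hence $\pi \in \Gr^{odd}_n(312)$ if and only if $m$ and $i$ have opposite parities.

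Finally I would split by the parity of $n$ and count pairs $(i,m)$ with $1 \le i < m \le n$ and $m-i$ odd, separating the contribution with $i$ odd and $m$ even from the contribution with $i$ even and $m$ odd. In each case these are standard triangular sums, and they combine to the claimed value $\binom{\lfloor(n-1)/2\rfloor+1}{2} + \binom{\lfloor n/2\rfloor+1}{2}$. The main step is the structural characterization; once it is in hand, the inversion count and the final tally are routine bookkeeping.
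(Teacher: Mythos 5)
Your proposal is correct, and it reaches the formula by a genuinely more direct route than the paper. You prove the structural form $\pi = 1,\dots,i-1,\,i+1,\dots,m,\,i,\,m+1,\dots,n$ (your argument via the triple at positions $d,d+1,d+2$ and the counting of values $\le m$ against the first $d+1$ positions is sound, and correctly yields $d=m-1$), observe $\inv(\pi)=m-i$, and then count pairs $(i,m)$ with $i<m$ of opposite parity: the pairs with $m$ even, $i$ odd contribute $\binom{\lfloor n/2\rfloor+1}{2}$ and those with $m$ odd, $i$ even contribute $\binom{\lfloor (n-1)/2\rfloor+1}{2}$, which is exactly the claimed total. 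The paper instead takes the same structural form as its starting point but then builds a parity-reversing map $\alpha:\Gr^{even,*}_{n}(312)\to\Gr^{odd}_{n-1}(312)$ (deleting $n$, and when necessary relocating $m$ next to $i$), obtaining the recurrence $\abs{\Gr^{odd}_{n}(312)}=\binom{n}{2}-\abs{\Gr^{odd}_{n-1}(312)}$, and then passes through the generating function and Theorem~\ref{thm:132avoiders} to extract the closed form. Your direct count is shorter and self-contained (in effect you redo for $312$ what the paper does for $132$), while the paper's detour buys the recurrence and the rational generating function recorded in Table~\ref{tab:size3patt} as byproducts; if you wanted those, you would still need to derive them separately from your closed formula.
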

\begin{proof}
Excluding the identity, any Grassmannian permutation that avoids $312$ can be written as
\begin{equation*}\label{312shape}
 \underbrace{1 \cdots (i-1)}_{\tau_{1}} \, \underbrace{(i+1) \cdots m}_{\tau_{2}} \, i \, \underbrace{(m+1) \cdots n}_{\tau_{3}} \;\text{ with } i,m\in\{1,\dots,n\},\; i<m,
\end{equation*}
where $\tau_{1} = \varepsilon$ if $i=1$, and $\tau_{3} = \varepsilon$ if $m = n$. Moreover, the permutation is odd if and only if $|\tau_2|$ is an odd number.

Since $\abs{\Gr_{n}(312)} = \binom{n}{2}+1$,
\begin{equation*} \label{312begeqn}
    \binom{n}{2}+1 = \abs{\Gr_{n}^{odd}(312)} +\abs{\Gr_{n}^{even,*}(312)}+1.
\end{equation*}
where $\Gr_{n}^{even,*}(312) = \Gr_{n}^{even}(312)\backslash\{\id_n\}$. Define $\alpha:\Gr_{n}^{even,*}(312) \to \Gr_{n-1}^{odd}(312)$ as follows. 

Let $\tau\in \Gr_{n}^{even,*}(312)$, so $|\tau_2|\ge 2$. If $m=n$ (i.e.\ $\tau_3=\varepsilon$), we let $\alpha(\tau)$ be the permutation in $\Gr_{n-1}(312)$ obtained by removing $n$ from $\tau$. This reduces $\tau_2$ by one, so $\alpha(\tau)$ is odd.

If $m<n$ ($\tau_3\not=\varepsilon$), then $\tau$ has $n$ at the end. In this case, we let $\alpha(\tau)$ be the permutation obtained by removing $n$ from $\tau$ and moving the entry $m$ to the right of $i$. Again, $\alpha(\tau)$ is in $\Gr_{n-1}(312)$ and has one less inversion than $\tau$, so $\alpha(\tau)$ is odd. Note that $\tau$ ends with a descent ($\tau_3=\varepsilon$) if and only if $\alpha(\tau)$ does. In other words, the map $\alpha$ is reversible.

Now, since $\Gr_{n-1}^{odd}(312) \cong \Gr_{n}^{even,*}(312)$, we conclude
\begin{equation*}
\abs{\Gr^{odd}_{n}(312)} = \binom{n}{2} - \abs{\Gr^{odd}_{n-1}(312)}.
\end{equation*}
This relation leads to the generating function given in Table~\ref{tab:size3patt}, which is $(x+1)$ times the generating function for $\abs{\Gr_{n}^{odd}(132)}$. So, the stated formula follows from Theorem~\ref{thm:132avoiders}.
\end{proof}

The following two propositions can be easily verified.
\begin{prop} \label{prop:312dyckpaths}
The elements of $\Gr^{odd}_{n}(312)$ are in one-to-one correspondence with the odd Grassmannian Dyck paths of semilength $n$ that have no valleys above height zero. 
\end{prop}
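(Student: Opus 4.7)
The plan is to pull the characterization through the bijection $\Phi$ between Grassmannian permutations and Grassmannian Dyck paths described in \cite[Sec.~4]{GiTo22a}, combined with the parity bijection quoted from \cite[Prop.~5.1]{GiTo22a} above. The structural form
\[ \tau = 1,\ldots,i-1,\; i+1,\ldots,m,\; i,\; m+1,\ldots,n, \]
with $1\le i<m\le n$ (or $\tau=\id_n$), established in the proof of Theorem~\ref{thm:312avoiders}, should translate under $\Phi$ into a particularly clean Dyck path, from which the claim will follow.

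The key step is to verify that $\Phi$ sends each non-identity such $\tau$ to the path
\[ (UD)^{i-1}\cdot U^{m-i+1}D^{m-i+1}\cdot (UD)^{n-m}, \]
while sending $\id_n$ to the zigzag $(UD)^n$. Here the fixed prefix $1,\ldots,i-1$ contributes $i-1$ unit peaks, the middle block $i+1,\ldots,m,i$ contributes a single pyramid of height $m-i+1$ (which houses the unique long ascent), and the fixed suffix $m+1,\ldots,n$ contributes $n-m$ trailing unit peaks. In every case, each valley of the resulting path sits on the $x$-axis, so the image lies in the set of Grassmannian Dyck paths with no valleys above height zero.

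For the reverse direction, any Grassmannian Dyck path with no valleys above height zero decomposes uniquely as a concatenation $U^{a_1}D^{a_1}\cdots U^{a_k}D^{a_k}$ of pyramids with $\sum a_j=n$, and the ``at most one long ascent'' condition forces at most one $a_j\ge 2$. Such decompositions are counted by $\binom{n}{2}+1 = \abs{\Gr_n(312)}$, so matching counts (together with the forward image being contained in this set) confirm that $\Phi$ restricts to a bijection between $\Gr_n(312)$ and this set of paths. The parity restriction then drops out: by \cite[Prop.~5.1]{GiTo22a} a path is odd iff the number of peaks at even height is odd, and since every unit pyramid peaks at odd height $1$, this reduces to the tall pyramid having even height $m-i+1$, equivalently $m-i$ odd---which is precisely the condition for $\tau$ to be odd identified in the proof of Theorem~\ref{thm:312avoiders}.

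The main obstacle is the explicit computation of $\Phi(\tau)$ in the key step, since it requires unpacking the definition of $\Phi$ from \cite{GiTo22a}; once the pyramid decomposition is in hand, the remaining steps slot together quickly and justify the authors' assertion that the proposition can be easily verified.
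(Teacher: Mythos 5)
Your proposal is correct and is essentially the verification the paper leaves to the reader (the paper offers no proof beyond ``can be easily verified''): push the block form of $312$-avoiding Grassmannian permutations through the bijection of \cite[Sec.~4 \& Prop.~5.1]{GiTo22a} to a concatenation of pyramids $(UD)^{i-1}U^{m-i+1}D^{m-i+1}(UD)^{n-m}$ and match parities via $\inv(\tau)=m-i$ versus the single possible even-height peak. Your explicit image formula agrees with the paths displayed in the paper's figure for $\Gr^{odd}_5(312)$, and the counting step $1+\sum_{a=2}^{n}(n-a+1)=\binom{n}{2}+1$ correctly closes the surjectivity argument.
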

\begin{figure}[ht]
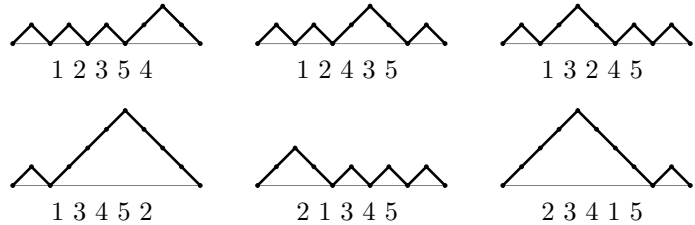

\small
\begin{tabular}{ccc}
\dyckpath{0,0}{5}{1,0,1,0,1,0,1,1,0,0}{\eps} & \dyckpath{0,0}{5}{1,0,1,0,1,1,0,0,1,0}{\eps} & \dyckpath{0,0}{5}{1,0,1,1,0,0,1,0,1,0}{\eps} \\
 1 2 3 5 4 & 1 2 4 3 5 & 1 3 2 4 5 \\[8pt]
 \dyckpath{0,0}{5}{1,0,1,1,1,1,0,0,0,0}{\eps} & \dyckpath{0,0}{5}{1,1,0,0,1,0,1,0,1,0}{\eps} & \dyckpath{0,0}{5}{1,1,1,1,0,0,0,0,1,0}{\eps} \\
 1 3 4 5 2 & 2 1 3 4 5 & 2 3 4 1 5 
\end{tabular}
\caption{Elements of $\Gr^{odd}_5(312)$ and their Dyck paths.}
\end{figure}

\begin{prop} \label{prop:231dyckpaths}
The elements of $\Gr^{odd}_{n}(231)$ are in one-to-one correspondence with the odd Grassmannian Dyck paths of semilength $n$ that have no peaks above height two. 
\end{prop}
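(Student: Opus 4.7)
The plan is to describe the Dyck paths corresponding to the $231$-avoiding Grassmannian permutations under the bijection of \cite[Sec.~4]{GiTo22a}, verify that the image consists of Grassmannian Dyck paths with all peaks at height at most $2$, and then invoke \cite[Prop.~5.1]{GiTo22a} to restrict to the odd cases on both sides.

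I would begin by characterizing $\Gr_n(231)$. Any $\tau \in \Gr_n \setminus \{\id_n\}$ has a unique descent at some position $d$, with $m := \tau(d)$ and first-run set $S$ of size $d$. If $S$ contained some value $v \in \{d, \dots, m-1\}$ in addition to $m$, then by size considerations some $w \in \{1, \dots, d-1\} \setminus S$ would appear in the second run, and the triple $(v, m, w)$ (at positions $k < d < p$) would form a $231$ pattern. Hence $S = \{1, 2, \dots, d-1, m\}$, so
\[
\tau = 1, 2, \dots, d-1,\, m,\, d, d+1, \dots, m-1,\, m+1, \dots, n, \qquad 1 \le d < m \le n.
\]

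A step-by-step application of the bijection to this $\tau$ produces the Dyck path
\[
(\U\D)^{d-1} \, \U \, (\U\D)^{m-d-1} \, \U\D\D \, (\U\D)^{n-m},
\]
which decomposes as a flat prefix $(\U\D)^{d-1}$ at height $0$, a ``bump'' $\U(\U\D)^{m-d-1}\U\D\D$ that rises to height $2$, oscillates between heights $1$ and $2$, and returns to $0$, and a flat suffix $(\U\D)^{n-m}$. All peaks lie at height $1$ or $2$. The identity $\id_n$ maps to $(\U\D)^n$, with peaks only at height $1$.

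For the converse, any Grassmannian Dyck path with every peak at height $\le 2$ must take this form: a long ascent (if present) can only be $\U\U$ starting at ground level (longer or higher-starting ascents would overshoot height $2$); after it the path is confined to heights $1$ and $2$ until it returns to $0$; and outside the bump the path alternates $\U\D$. The lengths of the three segments uniquely recover $(d, m)$, so the correspondence is a bijection between $\Gr_n(231)$ and Grassmannian Dyck paths with peaks $\le 2$. The main (minor) obstacle is the explicit Dyck-path computation from the bijection, but this parallels the analogous derivations for the patterns $132$ and $312$ (Propositions~\ref{prop:132dyckpaths} and \ref{prop:312dyckpaths}).
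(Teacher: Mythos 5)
Your proposal is correct and follows the route the paper intends (the paper leaves this as an easy verification via the bijection of \cite[Sec.~4]{GiTo22a} together with the parity correspondence of \cite[Prop.~5.1]{GiTo22a}): you characterize the $231$-avoiders as determined by the pair $(d,m)$, identify their image paths $(\U\D)^{d-1}\U(\U\D)^{m-d-1}\U\D\D(\U\D)^{n-m}$, and check that these are exactly the Grassmannian Dyck paths with peaks of height at most two. Your computed paths agree with the paper's figure for $\Gr^{odd}_5(231)$, so the argument is sound and essentially the same as the paper's.
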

\begin{figure}[ht]
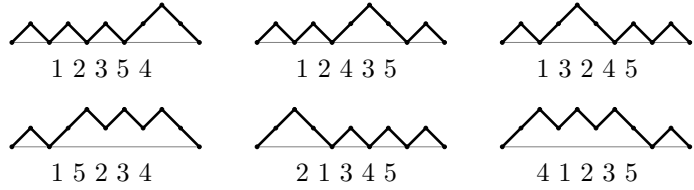

\small
\begin{tabular}{ccc}
\dyckpath{0,0}{5}{1,0,1,0,1,0,1,1,0,0}{\eps} & \dyckpath{0,0}{5}{1,0,1,0,1,1,0,0,1,0}{\eps} & \dyckpath{0,0}{5}{1,0,1,1,0,0,1,0,1,0}{\eps}\\
 1 2 3 5 4 & 1 2 4 3 5 & 1 3 2 4 5  \\[8pt]
 \dyckpath{0,0}{5}{1,0,1,1,0,1,0,1,0,0}{\eps} & \dyckpath{0,0}{5}{1,1,0,0,1,0,1,0,1,0}{\eps} & \dyckpath{0,0}{5}{1,1,0,1,0,1,0,0,1,0}{\eps}\\
 1 5 2 3 4 & 2 1 3 4 5 & 4 1 2 3 5\\
\end{tabular}
\caption{Elements of $\Gr^{odd}_5(231)$ and their Dyck paths}
\end{figure}

\section{Patterns of size 4}
\label{sec:size4patts}

Once again, recall that $\abs{\Gr^{odd}_{1}(\sigma)}=0$ and $\abs{\Gr^{odd}_{2}(\sigma)}=1$ for any $\sigma$ with $\abs{\sigma}\ge 3$.

\begin{thm} 
For $\sigma\in\{1243,2134,2341,4123\}$ and $n\ge 2$, we have
\[ \abs{\Gr^{odd}_{n}(\sigma)} = \binom{n}{3} + n - 1 - \abs{\Gr^{odd}_{n-1}(\sigma)}. \]
\end{thm}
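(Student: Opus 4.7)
By Theorem~\ref{thm:GrassPerms}, $\abs{\Gr_n(\sigma)} = 1 + \binom{n}{2} + \binom{n}{3}$. Splitting $\Gr_n(\sigma) = \{\id_n\} \sqcup \Gr^{odd}_n(\sigma) \sqcup \Gr^{even,*}_n(\sigma)$ and using $\binom{n}{2} - \binom{n-1}{2} = n - 1$, the claimed recurrence is equivalent to
\[ \abs{\Gr^{even,*}_n(\sigma)} = \binom{n-1}{2} + \abs{\Gr^{odd}_{n-1}(\sigma)}. \]
Since the reverse-complement involution preserves both the Grassmannian property and the inversion count, and pairs $1243 \leftrightarrow 2134$ and $2341 \leftrightarrow 4123$, it suffices to verify the recurrence for the two representatives $\sigma = 2341$ and $\sigma = 2134$.

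For $\sigma = 2341$, a direct analysis of the first-block set $S = \{\pi_1 < \cdots < \pi_d\}$ shows that every non-identity avoider satisfies either $S = \{1,\dots,m-1,x\}$ (Type~A, with $m < x \leq n$) or $S = \{1,\dots,m-1,x,y\}$ with $x<y$ (Type~B, with $m+1 \leq x < y \leq n$), where $m = \min([n] \setminus S)$. Using $\inv(\pi) = \sum_i (s_i - i)$, Type~A gives $\inv(\pi) = x - m$ and Type~B gives $\inv(\pi) = x + y - 2m - 1$. Summing the odd counts over sizes $n$ and $n-1$ splits accordingly. The Type~A contribution is
\[ \lfloor n^2/4 \rfloor + \lfloor (n-1)^2/4 \rfloor = \binom{n}{2}, \]
a direct identity (both sides equal $n(n-1)/2$). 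The Type~B contribution equals $\binom{n-1}{3}$ via the bijection $(m,x,y) \mapsto (m, x, y-1)$, which sends Type~B odd avoiders of size $n$ onto triples $(a,b,c)$ with $a<b<c\leq n-1$ and $b \not\equiv c \pmod 2$, complementing the Type~B odd avoiders of size $n-1$ (which correspond to triples with $b \equiv c$). Adding gives $\binom{n}{2} + \binom{n-1}{3} = \binom{n}{3} + n - 1$ by Pascal's rule, as required.

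The main obstacle is the pattern $\sigma = 2134$, whose Grassmannian avoiders admit a richer parameterization: the first block has the form $S = \{1,\dots,p\} \cup T$, where $T$ is obtained by omitting a single element from the interval $[n-k+p,\,n]$ (with $k = |S|$), and the parity of $\inv(\pi)$ depends on $p$, $k$, and the omitted element in a more intricate way. I would handle this case either by mirroring the Type~A/Type~B summation to recover the same two contributions $\binom{n}{2}$ and $\binom{n-1}{3}$, or---likely cleaner---by exhibiting an explicit parity-preserving bijection $\Gr^{odd}_n(2134) \to \Gr^{odd}_n(2341)$ that reduces this case to the one already established. Either route is combinatorially routine but requires careful bookkeeping of parities across the parameter space.
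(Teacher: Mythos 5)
Your reduction to the two representatives $2341$ and $2134$ via reverse complement is valid (reverse complement preserves the Grassmannian property and the inversion number), and your treatment of $\sigma=2341$ is correct and complete: the parametrization of non-identity avoiders by $S=\{1,\dots,m-1\}\cup T$ with $1\le\abs{T}\le 2$, the inversion counts $x-m$ and $x+y-2m-1$, the identity $\lfloor n^2/4\rfloor+\lfloor (n-1)^2/4\rfloor=\binom{n}{2}$, and the parity-swapping bijection $(m,x,y)\mapsto(m,x,y-1)$ giving $\binom{n-1}{3}$ all check out, and $\binom{n}{2}+\binom{n-1}{3}=\binom{n}{3}+n-1$. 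This is in fact a more direct enumeration than the paper's argument for $2341$, which instead pairs $\Gr^{odd}_{n-1}(2341)$ with most of $\Gr^{even}_{n}(2341)$ by lengthening a distinguished increasing segment and separately counts the $\binom{n-1}{2}+1$ exceptional even permutations.

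However, the theorem is not proved, because the second Wilf pair is missing: for $\sigma=2134$ (equivalently $1243$) you only describe two possible strategies and assert they are ``combinatorially routine.'' In particular, the suggestion to exhibit a parity-preserving bijection $\Gr^{odd}_n(2134)\to\Gr^{odd}_n(2341)$ begs the question--the equinumerosity of these two classes is exactly half of what the theorem asserts, and nothing in your $2341$ analysis transfers automatically, since the structure and inversion parity of $2134$/$1243$-avoiders depend on the parameters in a genuinely different way. The paper needs a separate, nontrivial construction here: an insertion map $\gamma:\Gr^{even,*}_{n-1}(1243)\to\Gr^{odd}_{n}(1243)$ defined by cases on the parity of the initial segment, whose image misses an exceptional set $\B_n$ of size $n-1$ (note that the exceptional contribution is $n-1$ here, versus $\binom{n-1}{2}$ in the $2341$ case, so the two analyses are not mirror images). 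Until you either carry out the Type~A/Type~B-style bookkeeping for $2134$ in full, or actually construct and verify such a bijection, the statement is established only for $\sigma\in\{2341,4123\}$.
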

\begin{proof}
We will focus on the patterns $2341$ and $1243$. The reverse complement map gives the statement for the other two patterns. 

Observe that every $\tau\in \Gr_{n}(2341)$, excluding the identity, must be of the form 
\begin{center}
\begin{tikzpicture}[scale=0.6]
\begin{scope}
\clip (0.1,0.1) rectangle (3.9,3.9);
\draw[gray] (0,0) grid (4,4);
\draw[mesh] (1,0) rectangle (2,4);
\draw[ultra thick] (0.2,0.2) -- (0.9,0.9);
\draw[ultra thick, orange] (2,1) -- (3.4,3.8);
\draw[very thick, dashed] (2,1) -- (3.4,3.8);
\draw[ultra thick] (3,3) -- (3.4,3.8);
\draw[fill] (1,3) circle (0.12);
\draw[fill,orange] (2,1) circle (0.12);
\draw[thick] (2,1) circle (0.12);
\draw[fill=white,thick] (3,3) circle (0.14);
\node[left=1pt] at (1,3.1) {\small $k$};
\node[right=1pt] at (2,1) {\small $\ell$};
\end{scope}

\node at (6,2) {or};
\node[below=3pt] at (60pt,0) {if $\tau$ avoids $231$};
\node[below=3pt] at (290pt,0) {if $\tau$ contains $231$};

\begin{scope}[xshift=220]
\clip (0.1,0.1) rectangle (4.9,3.9);
\draw[gray] (0,0) grid (5,4);
\draw[mesh] (1,0) rectangle (3,4);
\draw[ultra thick] (0.2,0.2) -- (0.9,0.9);
\draw[ultra thick] (3,1) -- (4.8,3.8);
\draw[line width=1.62pt, orange] (3.643,2) -- (4.286,3);
\draw[line width=1.3pt, dashed] (3.643,2) -- (4.286,3);
\foreach \x/\y in {1/2,2/3,3/1}{\draw[fill] (\x,\y) circle (0.12);}
\draw[fill=white,thick] (3.65,2) circle (0.14);
\draw[fill=white,thick] (4.3,3) circle (0.14);
\node[left=1pt] at (1,2.1) {\small $j$};
\node[left=1pt] at (2,3.1) {\small $k$};
\node[right=1pt] at (3,1) {\small $\ell$};
\end{scope}
\end{tikzpicture}
\end{center}
where a line segment (solid or dashed) represents an increasing sequence of consecutive numbers, and the length of the dashed portion (black \& orange) determines the parity of the permutation. For instance, such a permutation is odd if and only if the dashed segment has odd length. Note that every permutation in $\Gr^{odd}_{n-1}(2341)$ gives rise to a unique permutation in $\Gr^{even}_{n}(2341)$, created by adding one element to its dashed segment. Moreover, the only even permutations of size $n$ that cannot be generated through this process are those for which the dashed segment is actually empty. Let $\A_n$ denote the set of such permutations. We have $\id_n\in \A_n$, and if $\tau\in \A_n$ is not the identity, then it must be of the form 
\[ \tau =  \tau_1\, j\,(j+1)\,\ell\, \tau_2\tau_3 \;\text{ with $j,\ell\in\{1,2,\dots,n-1\}$ and $j>\ell$}, \]
where every $\tau_i$ is either $\varepsilon$ or an increasing run of consecutive numbers. There are $\binom{n-1}{2}$ such permutations, hence $\abs{\Gr^{even}_{n}(2341)} = \abs{\Gr^{odd}_{n-1}(2341)} + \binom{n - 1}{2} + 1$.
Therefore, 
\begin{align*}
  \abs{\Gr^{odd}_{n}(2341)} &= \abs{\Gr_{n}(2341)} - \abs{\Gr^{even}_{n}(2341)} \\
  &= 1 + \binom{n}{2} + \binom{n}{3} - \abs{\Gr^{odd}_{n-1}(2341)} - \binom{n - 1}{2} - 1 \\
  &= \binom{n}{3} + n -1 - \abs{\Gr^{odd}_{n-1}(2341)}.
\end{align*}

On the other hand, every permutation in $\Gr_{n}(1243)$ must be of one of the following forms:

\smallskip
\begin{center}
\begin{tikzpicture}[scale=0.6]
\begin{scope}
\clip (0.1,0.1) rectangle (4.9,3.9);
\draw[gray] (0,0) grid (5,4);
\draw[ultra thick] (1.1,2.1) -- (2,3);
\draw[ultra thick] (2.15,0.15) -- (4,2);
\draw[ultra thick] (4.1,3.1) -- (4.8,3.8);
\foreach \x/\y in {1/1,2/3,4/2}{\draw[fill] (\x,\y) circle (0.12);}
\draw[fill=white,thick] (3,1) circle (0.14);
\node[left=1pt] at (1,1.1) {\small $j$};
\node[left=1pt] at (2,3.1) {\small $k$};
\node[right=1pt] at (4,2) {\small $\ell$};
\end{scope}
\node[below=4pt] at (70pt,0) {(a) $j\,\tau_1\,k\,\tau_2 \tau_3\,\ell \,\tau_4$};
\node[below=4pt] at (260pt,0) {(b) $1\,\tau_1\,k\, \tau_3\,\ell \,\tau_4$};
\node[below=4pt] at (445pt,0) {(c) $\tau_1\,k\, \tau_2\,\ell \,\tau_4$};
\begin{scope}[xshift=190]
\clip (0.1,0.1) rectangle (4.9,3.9);
\draw[gray] (0,0) grid (5,4);
\draw[mesh] (2,0) rectangle (3,4);
\draw[ultra thick] (1.1,2.1) -- (2,3);
\draw[ultra thick] (3.1,1.1) -- (4,2);
\draw[ultra thick] (4.1,3.1) -- (4.8,3.8);
\foreach \x/\y in {1/1,2/3,4/2}{\draw[fill] (\x,\y) circle (0.12);}
\node[left=1pt] at (2,3.1) {\small $k$};
\node[right=1pt] at (4,2) {\small $\ell$};
\end{scope}
\begin{scope}[xshift=390]
\clip (0.1,0.1) rectangle (3.9,3.9);
\draw[gray] (0,0) grid (4,4);
\draw[mesh] (1,0) rectangle (2,4);
\draw[ultra thick] (0.1,2.1) -- (1,3);
\draw[ultra thick] (2.1,1.1) -- (3,2);
\draw[ultra thick] (3.1,3.1) -- (3.8,3.8);
\foreach \x/\y in {1/3,3/2}{\draw[fill] (\x,\y) circle (0.12);}
\node[left=1pt] at (1,3.1) {\small $k$};
\node[right=1pt] at (3,2) {\small $\ell$};
\end{scope}
\end{tikzpicture}
\end{center}
Every $\tau\in \Gr^{even,*}_{n-1}(1243)$ gives rise to a unique permutation $\gamma(\tau)$ in $\Gr^{odd}_{n}(1243)$, constructed as follows:
\begin{itemize}
\item[$\triangleright$] If $|\tau_1|$ is even, increase each term greater than $\ell$ by one and insert $\ell+1$ at the position to the right of $\ell$. For example, $\gamma(2567134)=2678134{\color{red}5}$, $\gamma(1456237)=156723{\color{red}4}8$, and $\gamma(5671234)=6781234{\color{red}5}$.

\item[$\triangleright$] If $\tau$ is of type (a) or (b) and $|\tau_1|$ is odd, increase each element by one and insert 1 at the position of the descent of $\tau$. For instance, we have $\gamma(245136)=356{\color{red}1}247$ and $\gamma(145236)=256{\color{red}1}347$.

\item[$\triangleright$] If $\tau$ is of type (c) and $|\tau_1|$ is odd, increase each element greater than $\ell+1$ by one and insert $\ell+2$ at the position to the right of $\ell$. For example, $\gamma(345612)=356712{\color{red}4}$.
\end{itemize}
In all three cases, the above algorithm creates an odd number of new inversions, so $\tau$ and $\gamma(\tau)$ have different parity.

Let $\B_n$ be the set of elements in $\Gr^{odd}_{n}(1243)$ of the form $1\,\tau_1\,k\, 2\,\tau_4$ or $\tau_1\,k\, 1\,\tau_4$. These are special cases of permutations of type (b) and (c), and they cannot be created from an element in $\Gr^{even,*}_{n-1}(1243)$ through the above algorithm. Every odd $1<k\le n$ produces an odd permutation of the form $1\,\tau_1\,k\, 2\,\tau_4$, so there are $\lfloor \frac{n-1}{2}\rfloor$ of them. On the other hand, every even $k\le n$ gives an odd permutation of the form $\tau_1\,k\, 1\,\tau_4$, so there are $\lfloor \frac{n}{2}\rfloor$ of those. In conclusion, $\abs{\B_n}= \lfloor \frac{n-1}{2}\rfloor+\lfloor \frac{n}{2}\rfloor = n-1$.

Finally, it is easy to see that $\gamma: \Gr^{even,*}_{n-1}(1243)\to \Gr^{odd}_{n}(1243)\backslash \B_n$ is bijective, hence
\[  \abs{\Gr^{odd}_{n}(1243)} = \abs{\Gr^{even,*}_{n-1}(1243)} + n - 1. \]
The claimed formula follows from the fact that $\abs{\Gr^{even,*}_{n-1}(\sigma)} = \binom{n}{3} - \abs{\Gr^{odd}_{n-1}(\sigma)}$.
\end{proof}

\medskip
Before discussing the next set of patterns, recall that if $|\sigma|=4$ and $\des(\sigma)=1$, then
$\abs{\Gr^*_{n}(\sigma)} = \binom{n+1}{3}$. In other words, for $n\ge 2$, the set $\Gr^*_{n}(\sigma)$ is equinumerous with the set $\W_{4}(n-2)$ of weak compositions of $n-2$ having 4 parts. For example, every permutation $\tau\in \Gr^*_{n}(3412)$ must be of the form 

\medskip
\begin{center}
\begin{tikzpicture}[scale=0.6]
\clip (0.1,0.1) rectangle (4.9,3.9);
\draw[gray] (0,0) grid (5,4);
\draw[mesh] (2,0) rectangle (3,4);
\draw[ultra thick] (0.2,0.2) -- (1.9,1.9);
\draw[ultra thick] (3.1,2.1) -- (4.8,3.8);
\foreach \x/\y in {2/3,3/1}{\draw[fill] (\x,\y) circle (0.12);}
\draw[fill=white,thick] (1,1) circle (0.14);
\draw[fill=white,thick] (4,3) circle (0.14);
\node[left=1pt] at (2,3.1) {\small $k$};
\node[right=1pt] at (3,1) {\small $\ell$};
\end{tikzpicture}
\end{center}
and can be written as
\[ \tau = \tau_1 \tau_2\, k\, \ell\, \tau_3 \tau_4 \;\text{ with } k>\ell. \]
Similarly, every $\tau\in \Gr^*_{n}(1423)$ is of the form

\medskip
\begin{center}
\begin{tikzpicture}[scale=0.6]
\clip (0.1,0.1) rectangle (3.9,3.9);
\draw[gray] (0,0) grid (4,4);
\draw[ultra thick] (0.2,1.2) -- (2,3);
\draw[ultra thick] (2.15,0.15) -- (2.9,0.9);
\draw[ultra thick] (3.1,3.1) -- (3.8,3.8);
\foreach \x/\y in {2/3,3/2}{\draw[fill] (\x,\y) circle (0.12);}
\draw[fill=white,thick] (1,2) circle (0.14);
\node[left=1pt] at (2,3.1) {\small $k$};
\node[left=1pt] at (3,2) {\small $\ell$};
\end{tikzpicture}
\end{center}
and can be written as
\[ \tau = \tau_1 \tau_2\, k\, \tau_3\, \ell\, \tau_4 \;\text{ with } k>\ell. \]
In both cases, the map
\begin{equation*}
\Lambda^{\sigma}(\tau) = (|\tau_1|,|\tau_2|,|\tau_3|,|\tau_4|)
\end{equation*}
gives a bijection $\Gr^*_{n}(\sigma)\to \W_{4}(n-2)$. Moreover, 
\begin{align} \notag
 \tau\in\Gr_{n}(3412) \text{ is odd} &\iff |\tau_2| \equiv |\tau_3| \!\pmod{2}, \\ \label{eq:1423odd}
 \tau\in\Gr_{n}(1423) \text{ is odd} &\iff (|\tau_1|+|\tau_2|+1)|\tau_3| + |\tau_2|\equiv 0 \!\pmod{2}.
\end{align}

\begin{thm}\label{thm:3412oddGrass} 
For $\sigma\in\{1423, 2314, 3412\}$ and $n\ge 1$, we have
\[ \abs{\Gr^{odd}_{2n-1}(\sigma)} = \frac12 \binom{2n}{3} \;\text{ and }\; \abs{\Gr^{odd}_{2n}(\sigma)} =
\frac12 \left[\binom{2n+1}{3} + n\right]. \]
\end{thm}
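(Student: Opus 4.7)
The plan is to use the parameterizations $\Lambda^{3412}$ and $\Lambda^{1423}$ already set up in the excerpt, together with the parity characterizations for the corresponding odd permutations, to reduce each count to an enumeration of parity-constrained weak compositions in $\W_{4}(n-2)$. The case $\sigma=2314$ I would handle by reverse complement: since reversal and complementation each send $\inv\mapsto\binom{n}{2}-\inv$, their composition preserves the inversion parity, and together with the fact (from the introduction) that $\pi^{rc}$ is Grassmannian iff $\pi$ is, this yields a parity-preserving bijection $\Gr^{odd}_n(\sigma)\to\Gr^{odd}_n(\sigma^{rc})$. Since $(1423)^{rc}=2314$, this gives $|\Gr^{odd}_n(2314)|=|\Gr^{odd}_n(1423)|$ at no cost. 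The base case $n=1$ is trivial on both sides.

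For $\sigma=3412$, the characterization $|\tau_2|\equiv|\tau_3|\pmod 2$ gives
$$|\Gr^{odd}_n(3412)|=\#\{(a,b,c,d)\in \W_{4}(n-2):b\equiv c\pmod 2\}.$$
Splitting by the common parity of $b$ and $c$ and convolving with the unrestricted contribution of $(a,d)$, I would read off the generating function in the total weight $a+b+c+d$:
$$\frac{1}{(1-x)^{2}}\cdot\frac{1+x^{2}}{(1-x^{2})^{2}}=\frac{1+x^{2}}{(1-x)^{4}(1+x)^{2}}.$$
The algebraic identity $1+x^{2}=\tfrac12\bigl[(1+x)^{2}+(1-x)^{2}\bigr]$ rewrites this as $\tfrac12\bigl[(1-x)^{-4}+(1-x^{2})^{-2}\bigr]$. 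Extracting $[x^{n-2}]$ via $[x^{m}](1-x)^{-4}=\binom{m+3}{3}$ and $[x^{m}](1-x^{2})^{-2}=(m/2+1)$ when $m$ is even (and $0$ otherwise) produces $\tfrac12\binom{n+1}{3}$ for $n$ odd and $\tfrac12\binom{n+1}{3}+\tfrac{n}{4}$ for $n$ even, which rearrange to the two closed forms in the statement by setting $n=2m-1$ and $n=2m$.

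For $\sigma=1423$, the parity condition $(a+b+1)c+b\equiv 0\pmod 2$ splits into two disjoint cases: either $b,c$ are both even, or $a,c$ are both odd. The first case already matches the ``both even'' subcase of the $3412$ count. For the second, the involution $(a,b,c,d)\mapsto(b,a,c,d)$ of $\W_{4}(n-2)$ is a bijection between $\{a,c\text{ odd}\}$ and $\{b,c\text{ odd}\}$, which is exactly the ``both odd'' subcase of the $3412$ count. Hence $|\Gr^{odd}_n(1423)|=|\Gr^{odd}_n(3412)|$, completing the proof.

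The main obstacle is essentially bookkeeping: correctly unpacking the parity condition for $1423$ and noticing the identity $1+x^{2}=\tfrac12[(1+x)^{2}+(1-x)^{2}]$ that cleanly splits the generating function into two series whose coefficients are standard. Once that rewriting is in place, the coefficient extraction is routine and the swap bijection handles the equivalence with $1423$ without any further computation.
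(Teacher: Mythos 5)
Your proposal is correct, and it reaches the result by a genuinely different route than the paper. You share the same starting point (the parameterization $\Lambda^{\sigma}:\Gr^*_{n}(\sigma)\to\W_4(n-2)$, the parity characterizations, and the reverse-complement reduction of $2314$ to $1423$, which indeed preserves inversion parity since $\inv(\pi^{rc})=\inv(\pi)$), but from there you count directly: the condition $b\equiv c\pmod 2$ gives the generating function $\frac{1+x^2}{(1-x)^4(1+x)^2}$, and the splitting $1+x^2=\tfrac12\bigl[(1+x)^2+(1-x)^2\bigr]$ yields exactly the stated closed forms — I checked the coefficient extraction and the odd/even cases, and they match the table. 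Your handling of $1423$ is also sound: the condition $(a+b+1)c+b\equiv 0\pmod2$ is indeed equivalent to the disjoint union of $\{b,c\text{ even}\}$ and $\{a,c\text{ odd}\}$ (all eight parity classes check out), and the swap $(a,b,c,d)\mapsto(b,a,c,d)$ transports the second case onto $\{b,c\text{ odd}\}$, giving $\abs{\Gr^{odd}_n(1423)}=\abs{\Gr^{odd}_n(3412)}$ with no computation. The paper instead builds explicit near-involutions $\phi$ and $\psi$ on compositions that biject $\Gr^{odd}_m(\sigma)$ (minus exceptional sets $\A_{2n}$, $\B_{2n}$ of size $n$) with $\Gr^{even,*}_m(\sigma)$, and then solves against the known total $1+\binom{m+1}{3}$. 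Your argument is shorter and more transparent arithmetically, and it also produces the even counts for free by complementation; the paper's bijective machinery has the separate payoff that the map $\phi$ is reused in Section~4 to construct the bijection $\xi$ to $\P_{n-2}$ (the multidigraph interpretation), which a pure generating-function count does not provide. Only cosmetic quibbles: you reuse the letter $n$ with two meanings in the final substitution, and you should say explicitly that the identity (being even) is harmlessly excluded when passing from $\Gr_n(\sigma)$ to $\Gr^*_n(\sigma)$; neither affects correctness.
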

\begin{proof}
For simplicity of notation, we write $\Lambda=\Lambda^{\sigma}$.  For $\hat t =  (t_1,t_2,t_3,t_4)$ with $t_i\ge 0$, let 
\begin{equation*}
\phi(\hat t) = 
\begin{cases}
 (t_1,t_2,t_3+1,t_4-1)& \text{if $t_3$ even and $t_4>0$,} \\
 (t_1-1,t_2+1,t_3,t_4)& \text{if $t_3$ even and $t_4=0$,} \\
 (t_1,t_2,t_3-1,t_4+1)& \text{if $t_3$ odd}.
\end{cases}
\end{equation*}
If $\hat t = \Lambda(\tau)$ with $\tau\in \Gr^{odd}_{2n-1}(3412)$, then $t_2\equiv t_3\pmod{2}$, and so $\phi(\hat t)_2\not\equiv \phi(\hat t)_3\pmod{2}$. Moreover, in this case, $\sum t_i$ is odd, so if $t_2$ and $t_3$ are both even and $t_4=0$, then $t_1$ must be odd and thus greater than 0. In conclusion, $\Lambda^{-1}(\phi(\hat t))$ is an element of $\Gr^{even,*}_{2n-1}(3412)$. Clearly, the map $\phi: \Lambda(\Gr^{odd}_{2n-1}(3412))\to \Lambda(\Gr^{even,*}_{2n-1}(3412))$ is invertible and 
\[ \Lambda^{-1}\circ\phi\circ\Lambda: \Gr^{odd}_{2n-1}(3412)\to \Gr^{even,*}_{2n-1}(3412) \] 
is a bijection. Let $\A_{2n}$ be the set of permutations $\tau\in\Gr^{odd}_{2n}(3412)$ with $\tau_1=\tau_4=\varepsilon$ and such that $|\tau_2|\equiv|\tau_3|\equiv 0\!\pmod{2}$. For example, $\A_8= \{81234567, 23814567, 23458167, 23456781\}$. We have $\abs{\A_{2n}}=n$, and if $\tau\not\in\A_{2n}$, then $\Lambda(\tau)$ cannot be of the form $(0,t_2,t_3,0)$ with $t_3$ even. The above map $\phi$ gives a bijection $\Lambda^{-1}\circ\phi\circ\Lambda: \Gr^{odd}_{2n}(3412)\backslash \A_{2n} \to \Gr^{even,*}_{2n}(3412)$.

In conclusion,
\[ \abs{\Gr^{odd}_{2n-1}(3412)} =  \abs{\Gr^{even}_{2n-1}(3412)} - 1, \quad \abs{\Gr^{odd}_{2n}(3412)} =  \abs{\Gr^{even}_{2n}(3412)} + n-1, \]
and the claimed formulas for $\sigma=3412$ both follow from the identities $\abs{\Gr_m(\sigma)} = 1+\binom{m+1}{3}$ and $\abs{\Gr_m(\sigma)} = \abs{\Gr^{even}_m(\sigma)} + \abs{\Gr^{odd}_m(\sigma)}$.

The case when $\sigma=1423$ can be treated similarly. For $\hat t =  (t_1,t_2,t_3,t_4)$, we now define
\begin{equation*}
\psi(\hat t) = 
\begin{cases}
 (t_1-1,t_2+1,t_3,t_4)& \text{if $t_1> 0$,} \\
 (t_2,0,t_3+1,t_4-1)& \text{if $t_1=0$}.
\end{cases}
\end{equation*}
If $\hat t = \Lambda(\tau)$ with $\tau\in \Gr^{odd}_{2n-1}(1423)$, \eqref{eq:1423odd} implies $(t_1+t_2+1)t_3+t_2\equiv 0\!\pmod{2}$. If $t_1>0$, then $(\psi(\hat t)_1+\psi(\hat t)_2+1)\psi(\hat t)_3+\psi(\hat t)_2\equiv 1\!\pmod{2}$. If $t_1=0$, then $t_2$ and $t_3$ must both be even, so $\psi(\hat t)_3$ is odd and again $(\psi(\hat t)_1+\psi(\hat t)_2+1)\psi(\hat t)_3+\psi(\hat t)_2\equiv 1\!\pmod{2}$. Moreover, since $\sum t_i$ is odd, $t_4$ must be odd and thus greater than 0. In conclusion, 
\[ \Lambda^{-1}\circ\psi\circ\Lambda: \Gr^{odd}_{2n-1}(1423)\to \Gr^{even,*}_{2n-1}(1423) \] 
is a bijective map. Now, let $\B_{2n}$ consist of all $\tau\in\Gr^{odd}_{2n}(1423)$ with $\tau_1=\tau_4=\varepsilon$. For example, $\B_8= \{81234567, 67812345, 45678123, 23456781\}$. Clearly, $\abs{\B_{2n}}=n$, and $\psi$ gives a bijection $\Lambda^{-1}\circ\psi\circ\Lambda: \Gr^{odd}_{2n}(1423)\backslash \B_{2n} \to \Gr^{even,*}_{2n}(1423)$. Note that if $\tau\not\in \B_{2n}$ and $t_1=0$, then $t_4>0$. From here, we can argue as for $\sigma=3412$. 

Finally, the statement for $\sigma=2314$ follows using the reverse complement map.
\end{proof}

\begin{thm} 
For $n\ge 2$ we have
\[ \abs{\Gr^{odd}_{n}(2413)} =  \abs{\Gr^{odd}_{n-1}(2413)} + \binom{\lfloor\frac{n}{2}\rfloor+1}{2}. \]
\end{thm}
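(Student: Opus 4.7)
The plan is to interpret the recurrence as saying that ``appending $n$'' defines a near-bijection $\Gr^{odd}_{n-1}(2413)\to \Gr^{odd}_{n}(2413)$, with the image missing exactly $\binom{\lfloor n/2\rfloor+1}{2}$ permutations. Concretely, set
\[ \iota:\Gr^{odd}_{n-1}(2413)\to \Gr^{odd}_{n}(2413),\qquad \iota(\tau)=\tau_1\cdots\tau_{n-1}\,n. \]
Appending the maximal value creates no new inversion and no new descent, and cannot produce a $2413$ occurrence either: any new occurrence would have $n$ in the fourth position of the pattern, playing the role of the value ``$3$'', but then some earlier entry would need to exceed $n$ to be the ``$4$''. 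Hence $\iota$ is a well-defined injection with image $\{\tau\in\Gr^{odd}_n(2413):\tau_n=n\}$. It remains to show that the complement $X_n:=\{\tau\in\Gr^{odd}_n(2413):\tau_n\neq n\}$ has size $\binom{\lfloor n/2\rfloor+1}{2}$.

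Any $\tau\in X_n$ must have $\tau_d=n$, where $d$ is its descent position, so $\tau$ is determined by the subset $R\subseteq[n-1]$ of values appearing after the descent; write $L=([n-1]\setminus R)\cup\{n\}$. The key structural step is the following characterization: because $n\in L$ exceeds every element of $R$, one can always arrange for $n$ to play the role of the ``$4$'' in a $2413$ occurrence, so $\tau$ contains $2413$ if and only if there exist $a\in L\setminus\{n\}$ and $c<e$ in $R$ with $c<a<e$. This latter condition is equivalent to $R$ failing to be an interval of consecutive integers; hence $\tau\in X_n$ corresponds to exactly those $R=\{r,r+1,\dots,s\}$ with $1\le r\le s\le n-1$.

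For such an interval $R$, a short inversion count gives $\inv(\tau)=(s-r+1)(n-s)$: the entry $n$ contributes $|R|=s-r+1$ inversions with the right side, each of the $n-1-s$ elements of $L\setminus\{n\}$ greater than $s$ contributes another $|R|$, and elements below $r$ contribute none. Requiring $\inv(\tau)$ odd forces both factors to be odd, which pins down the parities of $r$ and $s$ in terms of that of $n$. In either case, counting same-parity pairs $(r,s)$ with $r\le s$ in $[1,n-1]$ gives $\binom{\lfloor n/2\rfloor+1}{2}$. The main obstacle is the interval characterization of $R$; the injection itself and the subsequent inversion and parity calculations are routine.
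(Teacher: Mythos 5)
Your proof is correct, but it takes a genuinely different route from the paper's. The paper splits $\Gr^{odd}_{n}(2413)$ according to whether the permutation contains a $132$ pattern: since $S_n(132)\subset S_n(2413)$, the $132$-containing elements form a set $\E_n$, and the paper observes that any such permutation must start with $1$, so $\tau\mapsto 1\oplus\tau$ is a bijection $\Gr^{odd}_{n-1}(2413)\to\E_n$; the $132$-avoiders are then counted by the already-proved Theorem~\ref{thm:132avoiders}, giving the term $\binom{\lfloor n/2\rfloor+1}{2}$ for free. You instead split according to whether the last entry is $n$: appending $n$ gives the bijection onto $\{\tau:\tau_n=n\}$, and you count the complement from scratch, using that such $\tau$ has $n$ at its descent, that avoidance of $2413$ forces the second run $R$ to be an interval $\{r,\dots,s\}$ (any occurrence must take its ``2'',``4'' from the first run and ``1'',``3'' from the second, and $n$ at the descent can always serve as the ``4'', so a $2413$ exists exactly when $R$ has a gap), and that $\inv(\tau)=(s-r+1)(n-s)$, whose oddness pins down the parities of $r,s$ and yields $\binom{\lfloor n/2\rfloor+1}{2}$ by counting pairs. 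The paper's argument is shorter because it reuses Theorem~\ref{thm:132avoiders}; yours is self-contained, in effect re-deriving that count in a different guise, at the cost of the interval characterization (which you justify adequately). A pleasant byproduct of the comparison is that the two complements — the $132$-avoiders in $\Gr^{odd}_n(2413)$ and the elements of $\Gr^{odd}_n(2413)$ not ending in $n$ — are different subsets that are equinumerous.
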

\begin{proof}
Let $\E_n$ be the set of permutations in $\Gr^{odd}_{n}(2413)$ that contain a $132$ pattern. Then, since $S_{n}(132)\subset S_{n}(2413)$, we have $\abs{\Gr^{odd}_{n}(2413)} =  \abs{\E_n} + \abs{\Gr^{odd}_{n}(132)}$. 

Clearly, for every element $\tau\in \Gr^{odd}_{n-1}(2413)$, we have $1\oplus\tau\in \E_n$. On the other hand, every permutation in $\E_n$ must start with 1 (otherwise, 1 would be part of the descent creating a $2413$ pattern). In other words, the map $\tau\mapsto 1\oplus\tau$ gives a bijection from $\Gr^{odd}_{n-1}(2413)$ to $\E_n$, and the statement of the theorem follows by means of Theorem~\ref{thm:132avoiders}.
\end{proof}

\begin{thm}
For $n\ge 1$ we have
\[ \abs{\Gr^{odd}_{2n-1}(1324)} = \abs{\Gr^{odd}_{2n-1}(2413)}  \;\text{ and }\;  \abs{\Gr^{odd}_{2n}(1324)} = \abs{\Gr^{odd}_{2n}(2134)}. \]
\end{thm}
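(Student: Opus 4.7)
The plan is to encode each Grassmannian permutation $\pi\in\Gr_m$ by the binary word $s\in\{0,1\}^m$ with $s_v=1$ if and only if the value $v$ appears among the first $d$ entries of $\pi$, where $d$ is the descent position. A short verification shows that $\inv(\pi)$ equals the number of $01$-subwords of $s$ (that is, pairs $i<j$ with $s_i=0$ and $s_j=1$), and that $\pi$ avoids $1324$, $2413$, or $2134$ if and only if $s$ avoids the subword $1010$, $0101$, or $0100$, respectively.

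For the first equality, I would use the complement involution $s\mapsto\bar s$ defined by $\bar s_v=1-s_v$. It interchanges avoidance of $1010$ and $0101$, so it restricts to a bijection $\Gr_{2n-1}(1324)\to\Gr_{2n-1}(2413)$. The number of $01$-subwords of $\bar s$ equals the number of $10$-subwords of $s$, and these two counts sum to $k_0k_1$, where $k_0,k_1$ are the numbers of $0$'s and $1$'s in $s$. Since $2n-1$ is odd, one of $k_0,k_1$ is even, so $k_0k_1$ is even and complementation preserves the parity of $\inv$. Restricting to odd permutations yields the desired bijection.

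For the second equality, complementation no longer preserves parity (the product $k_0k_1$ can be odd when the length is even), so I would enumerate both sides directly. Any length-$2n$ word avoiding $1010$ has the form $0^a 1^b 0^c 1^d$ with $a+b+c+d=2n$, for which the number of $01$-subwords equals $ab+ad+cd$. Any length-$2n$ word avoiding $0100$ is of the form $1^a 0^b 1^c$ (with $01$-count $bc$) or $1^a 0^b 1^c\,0\,1^d$ with $b,c\ge 1$ (with $01$-count $bc+bd+d$). Imposing oddness of the $01$-count and summing over block sizes with the appropriate parity constraints, a case analysis on the parities of $(a,b,c,d)$ shows that both sides total $n+3\binom{n}{2}+4\binom{n}{3}$, which establishes the identity.

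The principal obstacle is this even case: the simple involutions I tried (reversal, reverse-complement, and variants of the complement) all fail to preserve the parity of $\inv$ in general, so a single elegant bijection appears elusive, and a case-by-case enumeration organized by the parities of the block sizes seems to be the cleanest available path. Once the identity $|\Gr^{odd}_{2n}(1324)|=n+3\binom{n}{2}+4\binom{n}{3}$ is secured, matching it with the recurrence $|\Gr^{odd}_{n}(2134)|=\binom{n}{3}+n-1-|\Gr^{odd}_{n-1}(2134)|$ established earlier also gives the equality, furnishing a second, more computational route.
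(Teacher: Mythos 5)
Your proposal is correct, but it takes a genuinely different route from the paper. You transfer everything to a word model: encode a one-descent permutation by the indicator word of the values lying in its pre-descent block, so that $\inv$ becomes the number of $01$-pairs and avoidance of $1324$, $2413$, $2134$ becomes avoidance of the subsequences $1010$, $0101$, $0100$; this dictionary is correct. For odd length your complementation involution is a clean, global bijection: since $k_0k_1$ is even when $k_0+k_1=2n-1$, the $01$- and $10$-counts have the same parity, so the parity of $\inv$ is preserved (and, being odd, is nonzero, so the complemented word still encodes a genuine non-identity Grassmannian permutation). For even length you abandon bijections and count both sides directly, arriving at $n+3\binom{n}{2}+4\binom{n}{3}$, which is indeed the correct common value. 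The paper instead constructs explicit bijections $\varphi_1,\varphi_2$ by writing the permutations as inflations of $2413$, $1324$, or $13524$ by identity blocks and permuting or adjusting the block sizes, with a parity check on $\inv$ in each case; that approach stays bijective for both parities of $n$ (at the cost of a case split inside $\varphi_2$), whereas yours trades the even-length bijection for a routine enumeration but yields, as a byproduct, a closed formula for $\abs{\Gr^{odd}_{2n}(1324)}$ that the paper never states.

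One caution for the even case: the parametrization $0^a1^b0^c1^d$ of $1010$-avoiding words is not unique when interior blocks are empty (e.g.\ $0^a1^b$ arises from many quadruples), so summing naively over all weak compositions of $2n$ overcounts; already for $2n=2$ it would give $3$ instead of $1$. Organize the count by maximal block structure ($0^a1^b$, $0^a1^b0^c$, $1^b0^c1^d$, $0^a1^b0^c1^d$ with all displayed parts positive): this gives $n+2\binom{n}{2}+\binom{n+1}{3}+3\binom{n}{3}$, while the $0100$ side gives $n+\binom{n}{2}+2\binom{n+1}{3}+2\binom{n}{3}$, and both simplify to $n+3\binom{n}{2}+4\binom{n}{3}$, so your claimed identity does check out once the bookkeeping is done this way.
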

\begin{proof}
To prove both identities we will provide explicit bijections between the sets involved. We start by pointing out that the plot of any permutation in $\Gr^{odd}_{n}(1324)$ looks like
\begin{center}
\begin{tikzpicture}[scale=0.6]
\clip (0.05,0.05) rectangle (3.95,3.95);
\draw[gray] (0,0) grid (4,4);
\draw[ultra thick] (0.1,1.1) -- (0.93,1.93);
\draw[ultra thick] (1,3) -- (1.9,3.9);
\draw[ultra thick] (2.1,0.1) -- (3,1);
\draw[ultra thick] (3.1,2.1) -- (3.93,2.93);
\end{tikzpicture}
\end{center}
and can therefore be written as an inflation of $2413$ by empty or identity permutations.
Let $\varphi_1:\Gr^{odd}_{2n-1}(2413)\to \Gr^{odd}_{2n-1}(1324)$ be defined as follows. If $\tau\in \Gr^{odd}_{2n-1}(2413)$ avoids the pattern $1324$, we let $\varphi_1(\tau)=\tau$. If $\tau\in \Gr^{odd}_{2n-1}(2413)$ contains the pattern $1324$, it must be of the form
\begin{center}
\begin{tikzpicture}[scale=0.6]
\clip (0.1,0.1) rectangle (3.9,3.9);
\draw[gray] (0,0) grid (4,4);
\draw[ultra thick] (0.2,0.2) -- (1,1);
\draw[ultra thick] (1,2) -- (1.9,2.9);
\draw[ultra thick] (2,1) -- (2.9,1.9);
\draw[ultra thick] (3,3) -- (3.8,3.8);
\foreach \x/\y in {0.25/0.25,1/2,2/1,3/3}{\draw[fill] (\x,\y) circle (0.12);}
\end{tikzpicture}
\end{center}
and can then be written as 
\[ \tau = 1324[\id_{k_1},\id_{k_2},\id_{k_3},\id_{k_4}], \]
where $k_j\ge 1$ and $\sum k_j=2n-1$. In this case, we let $\varphi_1(\tau)=2413[\id_{k_2},\id_{k_1},\id_{k_4},\id_{k_3}]$. Note that $\varphi_1(\tau)$ has $(k_2+k_1)k_4+k_1k_3$ inversions. Moreover, $\tau$ odd implies $k_2\equiv k_3\equiv 1$, and $\sum k_j\equiv 1$  implies $k_1\not\equiv k_4\pmod{2}$. Thus $k_1+k_4\equiv 1$, $k_1k_4\equiv 0$, and therefore,
\[ \inv(\varphi_1(\tau)) = (k_2+k_1)k_4+k_1k_3 \equiv (1+k_1)k_4+k_1 \equiv 1\!\pmod{2}. \]
Hence $\varphi_1(\tau)\in \Gr^{odd}_{2n-1}(1324)$. In the definition of $\varphi_1$, the first case produces permutations that avoid $2413$ while the second case gives permutations that contain a $2413$ pattern. Therefore, the map $\varphi_1:\Gr^{odd}_{2n-1}(2413)\to \Gr^{odd}_{2n-1}(1324)$ is bijective.

We now proceed to define a map $\varphi_2:\Gr^{odd}_{2n}(2134)\to \Gr^{odd}_{2n}(1324)$. As before, we let $\varphi_2(\tau)=\tau$ whenever $\tau$ avoids both patterns. If $\tau\in \Gr^{odd}_{2n}(2134)$ contains the pattern $1324$, it must be of the form
\begin{center}
\begin{tikzpicture}[scale=0.6]
\clip (0.1,0.1) rectangle (3.9,3.9);
\draw[gray] (0,0) grid (4,4);
\draw[ultra thick] (0.2,0.2) -- (1,1);
\draw[ultra thick] (1,2) -- (1.5,3);
\draw[ultra thick, gray!80] (1.5,3) -- (1.9,3.8);
\draw[ultra thick] (2,1) -- (2.9,1.9);
\foreach \x/\y in {0.25/0.25,1/2,2/1,3/3}{\draw[fill] (\x,\y) circle (0.12);}
\draw[fill=white,thick] (1.5,3) circle (0.14);
\end{tikzpicture}
\end{center}
where the gray segment could be empty. Thus, it can be written as
\[ \tau = 13524[\id_{k_1},\id_{k_2},\id_{k_3},\id_{k_4},1], \]
where $k_1,k_2,k_4\ge 1$, $k_3\ge 0$, $\sum k_j = 2n-1$, and $(k_2+k_3)k_4 + k_3\equiv 1\pmod{2}$. Here we adopt the convention that $\id_0=\varepsilon$. In this case, we define
\begin{equation*}
 \varphi_2(\tau) = 
 \begin{cases}
  2413[\id_{k_2},\id_{k_3},\id_{k_1},\id_{k_4+1}], & \text{if } k_3\not\equiv k_4\!\!\pmod{2}, \\
  2413[\id_{k_2+1},\id_{k_3},\id_{k_1-1},\id_{k_4+1}], & \text{if } k_3\equiv k_4\equiv 1\!\!\pmod{2}.
 \end{cases}
\end{equation*}
If $k_3\not\equiv k_4$, then $k_3+k_4\equiv 1$ and $k_3k_4\equiv 0\pmod{2}$. $\sum k_j = 2n-1$ then implies $k_1\equiv k_2$, and since $(k_2+k_3)k_4 + k_3\equiv 1$, we must have $k_3\equiv 1+k_1k_4\pmod{2}$. Therefore,
\begin{align*}
 \inv(\varphi_2(\tau)) &= (k_2+k_3)k_1+k_3(k_4+1) \\
 &\equiv  (k_1+k_3+k_4)k_1+1 \!\pmod{2}\\
 &\equiv  (k_1+1)k_1+1 \equiv 1 \!\pmod{2}.
\end{align*}
On the other hand, if $k_3\equiv k_4\equiv 1$, then $k_2\equiv 1$, $k_1\equiv 0$ (in particular, $k_1\ge 2$), and so
\[ \inv(\varphi_2(\tau)) = (k_2+1+k_3)(k_1-1)+k_3(k_4+1) \equiv 1\!\pmod{2}. \]
By construction, $\varphi_2(\tau)$ is odd, avoids $1324$, and it contains a $2134$ pattern if and only if $\tau$ contains a $1324$ pattern. The map $\varphi_2:\Gr^{odd}_{2n}(2134)\to \Gr^{odd}_{2n}(1324)$ is a bijection.
\end{proof}

\begin{thm}
For $\sigma\in\{1342, 3124\}$ and $n\ge 1$, we have
\[ \abs{\Gr^{odd}_{2n+1}(\sigma)} = \abs{\Gr^{odd}_{2n+1}(2341)}  \;\text{ and }\;  \abs{\Gr^{odd}_{2n}(\sigma)} = \abs{\Gr^{odd}_{2n}(1423)}. \]
\end{thm}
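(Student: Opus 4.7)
Since the reverse-complement map preserves inversion parity and sends $1342$ to $3124$, it suffices to establish the two identities for $\sigma = 1342$. My plan is to first describe the shape of $\Gr^*_n(1342)$ via two parametric families, compute the inversion parity in each, and then construct explicit parity-preserving bijections onto $\Gr^{odd}_n(2341)$ (for odd $n$) and $\Gr^{odd}_n(1423)$ (for even $n$).

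The first step is a case analysis of how the four entries of a $1342$ occurrence can lie across the single descent of a Grassmannian permutation. Any such occurrence must place three of its entries on the left of the descent and one on the right, which forces $\tau \in \Gr^*_n(1342)$ with descent position $p$ and descent pair $(k,\ell) = (\tau_p,\tau_{p+1})$ to belong to exactly one of two disjoint families. In the \emph{block family} (A), characterized by $\tau_1 = \ell_1 \geq 2$ and $p \geq 2$,
\[
\tau = \ell_1, \ell_1+1, \ldots, \ell_1+p-2,\, k,\, 1, 2, \ldots, \ell_1-1,\, \ell_1+p-1, \ldots, k-1,\, k+1, \ldots, n,
\]
and setting $(a,b,c,d) = (\ell_1-1,\, p-2,\, k-\ell_1-p+1,\, n-k)$ parametrizes this family by the subset of $\W_4(n-2)$ with $a \geq 1$. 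In the \emph{transposition family} (B), characterized by $p = 1$ or $\tau_1 = 1$, we have $\tau = 1, 2, \ldots, \ell-1,\, k,\, \ell, \ldots, k-1,\, k+1, \ldots, n$ for a unique pair $(\ell,k)$ with $1 \leq \ell < k \leq n$. A direct inversion count gives $\inv(\tau) = ab + 2a + c$ in (A) and $\inv(\tau) = k - \ell$ in (B), so the odd permutations correspond to parameters with $ab + c$ odd (family A) or $k-\ell$ odd (family B).

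Next, I will construct the bijections. For even $n$, I will build a map $\Gr^{odd}_{n}(1342) \to \Gr^{odd}_{n}(1423)$ using the shape of $1423$-avoiders from Theorem~\ref{thm:3412oddGrass}, namely $\tau = \tau_1\tau_2\,k\,\tau_3\,\ell\,\tau_4$ with $k > \ell$ and the parity invariant from~\eqref{eq:1423odd}: the transposition family (B) corresponds to the $1423$-subfamily with $|\tau_1| = 0$, and the block family (A) to those with $|\tau_1| \geq 1$, with the two parity conditions aligning after the appropriate identification of parameters. For odd $n$, an analogous map $\Gr^{odd}_{n}(1342) \to \Gr^{odd}_{n}(2341)$ is built using the shape of $2341$-avoiders from the proof of Theorem~\ref{thm:GrassPerms}, via its parity-determining ``orange segment''.

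The main obstacle will be the bifurcation of $\Gr^*_n(1342)$ into the two structurally distinct families (A) and (B), which prevents a uniform four-parameter form like the one available for $3412$ or $1423$. Each bijection must therefore be defined piecewise, with separate parity bookkeeping on each family; and since the target changes with the parity of $n$, two independent maps will be required. A cleaner alternative, if the piecewise bijection proves unwieldy, is to compute the generating function $\sum_n \abs{\Gr^{odd}_n(1342)}\, x^n$ directly from the (A)/(B) decomposition using parity-restricted geometric series and verify the two identities by matching its odd and even parts, respectively, against the generating functions for $\abs{\Gr^{odd}_n(2341)}$ and $\abs{\Gr^{odd}_n(1423)}$.
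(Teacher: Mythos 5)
Your structural analysis of $\Gr^*_n(1342)$ is correct and matches the paper's viewpoint (your families (A) and (B) are exactly the inflations of $24135$ that the paper uses), and your inversion counts $ab+2a+c$ and $k-\ell$ are right; the reduction to $\sigma=1342$ via reverse complement is also how the paper begins. The genuine gap is that the heart of the proof --- the bijections --- is only promised, and the one concrete claim you make about how they would go is false. You assert that the transposition family (B) should correspond to the $1423$-avoiders with $\abs{\tau_1}=0$ and the block family (A) to those with $\abs{\tau_1}\ge 1$. Already at $n=4$ this is numerically impossible: the odd members of family (B) are $2134,\,1324,\,1243,\,4123$ (four of them), while the odd $1423$-avoiders with $\abs{\tau_1}=0$ are $2134,\,2341,\,4123$ (three of them, since by \eqref{eq:1423odd} with $\abs{\tau_1}=0$ one needs $\abs{\tau_2}$ and $\abs{\tau_3}$ both even); correspondingly family (A) has two odd members against three targets with $\abs{\tau_1}\ge 1$. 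So no bijection can respect your proposed split, and the ``appropriate identification of parameters'' must move elements across the families --- precisely the bookkeeping your proposal does not supply. For comparison, the paper avoids this mismatch by going in the other direction: it defines $\psi_1:\Gr^{odd}_{2n+1}(2341)\to\Gr^{odd}_{2n+1}(1342)$ and $\psi_2:\Gr^{odd}_{2n}(1423)\to\Gr^{odd}_{2n}(1342)$ which fix every permutation avoiding both patterns and, on the remaining permutations, permute and slightly resize the inflation blocks (a $\pm1$ parity correction in one case), checking oddness directly.

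Your fallback route --- computing $\abs{\Gr^{odd}_n(1342)}$ directly from the (A)/(B) parametrization and comparing with the already-established formulas for the $2341$ and $1423$ classes --- is sound and would give a legitimately different (and arguably more elementary) proof; indeed your decomposition gives $\abs{\Gr^{odd}_4(1342)}=2+4=6$, in agreement with Table~\ref{tab:size4patt}. But in the proposal this computation is not carried out either: you would still need to evaluate the number of $(a,b,c,d)\in\W_4(n-2)$ with $a\ge1$ and $ab+c$ odd, plus the number of pairs $\ell<k$ with $k-\ell$ odd, and match the result against $\abs{\Gr^{odd}_{2n+1}(2341)}$ (via its recurrence or generating function) and $\abs{\Gr^{odd}_{2n}(1423)}=\tfrac12\bigl[\binom{2n+1}{3}+n\bigr]$. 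As written, the proposal establishes the setup but not the theorem.
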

\begin{proof}
First, the reverse complement map gives the identity $\abs{\Gr^{odd}_{n}(3124)}=\abs{\Gr^{odd}_{n}(1342)}$.

Similar to the proof of the previous theorem, we will provide explicit bijections 
\[ \psi_1:\Gr^{odd}_{2n+1}(2341)\to \Gr^{odd}_{2n+1}(1342) \text{ and } 
\psi_2:\Gr^{odd}_{2n}(1423)\to \Gr^{odd}_{2n}(1342). \]
It is easy to verify that every element of $\Gr^{odd}_{n}(1342)$ can be written as an inflation of $24135$ by empty or identity permutations.

Let $\tau\in \Gr^{odd}_{2n+1}(2341)$. If $\tau$ avoids $1342$, we let $\psi_1(\tau)=\tau$. If $\tau$ contains $1342$, then it must be of the form $\tau = 135246[\id_{k_1},1,1,\id_{k_2},\id_{k_3},\id_{k_4}]$ with $k_1,k_2,k_3\ge 1$, $k_4\ge 0$, $\sum k_j=2n-1$, and $k_3$ odd. In this case, we let
\begin{equation*}
 \psi_1(\tau) = 
 \begin{cases}
  24135[\id_{k_2+1},1,\id_{k_1},\id_{k_3},\id_{k_4}], & \text{if } k_1k_2\equiv 0\!\!\pmod{2}, \\
  24135[\id_{k_2+1},1,\id_{k_1},\id_{k_3-1},\id_{k_4+1}], & \text{if } k_1 k_2\equiv 1\!\!\pmod{2}.
 \end{cases}
\end{equation*}
Moreover,
\begin{equation*}
\inv(\psi_1(\tau)) =
 \begin{cases}
   (k_2+2)k_1 + k_3\equiv 1, & \text{if } k_1k_2\equiv 0\!\!\pmod{2}, \\
   (k_2+2)k_1 + k_3-1\equiv 1, & \text{if } k_1 k_2\equiv 1\!\!\pmod{2}.
 \end{cases}
\end{equation*}
In all cases, $\psi_1(\tau)$ is odd, avoids $1342$, and it contains a $2341$ pattern if and only if $\tau$ contains a $1342$ pattern. It is not difficult to see that $\psi_1$ is a bijective map.

Now, let $\tau\in \Gr^{odd}_{2n}(1423)$. If $\tau$ avoids $1342$, we let $\psi_2(\tau)=\tau$. If $\tau$ contains $1342$, then it must be of the form $\tau = 24135[\id_{k_1}, \id_{k_2},\id_{k_3},1,\id_{k_4}]$ with $k_1\ge 1$, $k_2\ge 2$, $k_3,k_4\ge 0$, $\sum k_j=2n-1$, and $(k_1+k_2)k_3 + k_2\equiv 1\pmod{2}$. In this case, we let
\begin{equation*}
 \psi_2(\tau) = 
 \begin{cases}
  24135[\id_{k_1},1,\id_{k_3},\id_{k_2},\id_{k_4}], & \text{if } k_2\equiv 1\!\!\pmod{2}, \\
  24135[\id_{k_1+1},1,\id_{k_3},\id_{k_2},\id_{k_4-1}], & \text{if } k_2\equiv 0\!\!\pmod{2}.
 \end{cases}
\end{equation*}
If $k_2\equiv 1\!\pmod{2}$, then $(k_1+1)k_3 \equiv 0\!\pmod{2}$, and so 
\[ \inv(\psi_2(\tau))=(k_1+1)k_3+k_2\equiv 1\!\pmod{2}. \] 
On the other hand, if $k_2\equiv 0\!\pmod{2}$, then $k_1k_3\equiv 1\!\pmod{2}$, and so 
\[ \inv(\psi_2(\tau))=(k_1+2)k_3+k_2\equiv k_1k_3\equiv 1\!\pmod{2}. \]
In conclusion, $\psi_2(\tau)$ is odd, avoids $1342$, and it contains a $1423$ pattern whenever $\tau$ contains a $1342$ pattern. As for $\psi_1$, it can be easily verified that $\psi_2$ is bijective.
\end{proof}

\section{Bijections to integer partitions and multidigraphs}
\label{sec:bijections}
In this section, we discuss combinatorial interpretations for the elements of $\Gr^{odd}_{n}(2413)$, enumerated by the sequence \cite[A006918]{Sloane}, and for the elements of $\Gr^{odd}_{n}(3412)$, counted by the sequence \cite[A005993]{Sloane}.

It is not hard to see that every permutation $\tau\in \Gr^*_{n}(2413)$ may be written as an inflation
\[ \tau = 1324[\id_{k_1},\id_{k_2},\id_{k_3},\id_{k_4}], \]
where $k_1,k_4\ge 0$, $k_2,k_3\ge 1$, $\sum k_j=n$, and $\id_0=\varepsilon$. Moreover, $\inv(\tau)=k_2\cdot k_3$, hence the permutation $\tau$ is odd if and only if $k_2\equiv k_3\equiv 1\!\pmod{2}$. The map 
$\tau\mapsto (k_1,k_2,k_3,k_4)$ gives a bijection between $\Gr^*_{n}(2413)$ and the set of weak compositions of $n$ where parts 2 and 3 are greater than zero.

\begin{prop}
The elements of $\Gr^{odd}_{n}(2413)$ are in one-to-one correspondence with the integer partitions of $n+2$ having Durfee square of size $2$. 
\end{prop}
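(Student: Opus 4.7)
The plan is to build directly on the parameterization established immediately before the proposition: every $\tau\in\Gr^{odd}_{n}(2413)$ is uniquely determined by a quadruple $(k_1,k_2,k_3,k_4)$ of nonnegative integers with $k_1+k_2+k_3+k_4=n$ and $k_2,k_3$ both odd. Thus it suffices to biject such quadruples with partitions of $n+2$ whose Durfee square has size exactly $2$.

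First I would record the structure of the target set. A partition $\lambda\vdash n+2$ has Durfee square of size $2$ precisely when $\lambda_1\geq\lambda_2\geq 2$ and $\lambda_i\leq 2$ for all $i\geq 3$. Writing $\lambda_1=2+a_1$ and $\lambda_2=2+a_2$ with $a_1\geq a_2\geq 0$, and letting $p$ and $q$ denote the number of parts $\lambda_i$ with $i\geq 3$ equal to $2$ and to $1$ respectively, such a $\lambda$ corresponds to a quadruple $(a_1,a_2,p,q)$ with $a_1\geq a_2\geq 0$ and $p,q\geq 0$. The weight condition $|\lambda|=n+2$ translates into $a_1+a_2+2p+q=n-2$.

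With this setup the bijection is essentially forced. Writing $k_2=2c_2+1$ and $k_3=2c_3+1$ with $c_2,c_3\geq 0$, I would define
\[
\Phi(k_1,k_2,k_3,k_4)=(a_1,a_2,p,q):=(k_1+c_2,\,c_2,\,c_3,\,k_4).
\]
Then $a_1\geq a_2\geq 0$ and $p,q\geq 0$ hold by construction, and
\[
a_1+a_2+2p+q=k_1+2c_2+2c_3+k_4=n-2,
\]
so $\Phi$ lands in the desired set. The inverse is equally transparent: recover $c_2=a_2$, $k_1=a_1-a_2$, $c_3=p$, $k_4=q$, and set $k_2=2c_2+1$, $k_3=2c_3+1$.

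No step here presents a real obstacle; the argument is essentially a repackaging of the common generating function $1/\bigl((1-x)^2(1-x^2)^2\bigr)$ in two combinatorial guises. The only design choice requiring thought is how to distribute the two odd parameters $k_2,k_3$ between the arm and the leg of the Young diagram: here $c_2$ is absorbed into the arm (becoming $a_2$, with $k_1$ playing the role of the overshoot $a_1-a_2$), while $c_3$ becomes the number of length-$2$ rows of the leg and $k_4$ becomes the number of length-$1$ rows.
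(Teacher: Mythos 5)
Your proof is correct, but it takes a genuinely different route from the paper's. Both arguments start from the same encoding, recorded just before the proposition: $\tau\in\Gr^{odd}_{n}(2413)$ corresponds to $(k_1,k_2,k_3,k_4)$ with $k_2,k_3$ odd and $k_1+k_2+k_3+k_4=n$. The paper then works directly with Ferrers diagrams: it glues $k_1$ cells below the first column of a $2\times2$ square, $k_2-1$ below the second column, $k_3-1$ to the right of the second row and $k_4$ to the right of the first row, and, since the result need not be a partition, it corrects the first two columns (and then the first two rows) by a four-case parity analysis (do nothing, swap, or move one cell), with reversibility argued by reading off the parities of those columns and rows in the final diagram. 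You instead parameterize the target set itself --- a partition of $n+2$ with Durfee square of size $2$ corresponds to $(a_1,a_2,p,q)$ with $a_1\ge a_2\ge 0$, $p,q\ge 0$, $a_1+a_2+2p+q=n-2$ --- and exploit the oddness of $k_2,k_3$ explicitly via $k_2=2c_2+1$, $k_3=2c_3+1$ to give the single formula $(a_1,a_2,p,q)=(k_1+c_2,\,c_2,\,c_3,\,k_4)$, whose inverse is immediate. Your route buys a uniform, case-free bijection with a transparent inverse (the slack in $a_1\ge a_2$ absorbs $k_1$, and the weight $2p$ absorbs the odd parameter $k_3$); the paper's route buys a construction carried out pictorially on the Ferrers diagram, at the price of the parity case analysis and a less explicit inverse. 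The verifications in your writeup (the characterization of Durfee square size $2$, the weight count, and the inverse map) all check out.
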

\begin{proof}
For $\tau\in \Gr^{odd}_{n}(2413)$ let $w_\tau=(k_{1},k_{2},k_{3},k_{4})$ be its corresponding weak composition of $n$. Recall that $k_2$ and $k_3$ must both be odd numbers. We then create the Ferrers diagram of a partition of $n+2$ through the following process. To the diagram of the partition $2+2$, we append circles vertically downward or horizontally to the right based on the four elements of $w_\tau$ as shown:
\begin{center}
\def\rad{0.15}
\begin{tikzpicture}[scale=1]
\draw[fill=black!65] foreach \i in {1,2}{ (0.7*\i,0.7) circle (\rad)};
\draw[fill=black!65] foreach \i in {1,2}{ (0.7*\i,0) circle (\rad)};
\draw (0.7,-0.45) node {$\vdots$};
\draw (0.74,-1.1) node {\small $k_{1}$};
\draw (0.7,-1.5) node {$\vdots$};
\draw (1.4,-0.45) node {$\vdots$};
\draw (1.55,-1.1) node {\small $(k_2\!-\!1)$};
\draw (1.4,-1.5) node {$\vdots$};
\draw (2.95,0) node {$\cdots$\,{\small $(k_3\!-\!1)$} $\cdots$};
\draw (2.57,0.7) node {$\cdots$\,{\small $k_{4}$} $\cdots$};
\end{tikzpicture}    
\end{center}
This representation is not necessarily that of an integer partition, so we make adjustments to the first two columns (and similarly to the first two rows) as follows:
\begin{enumerate}[\;(i)]
  \item $k_{1} \equiv 1\!\pmod{2}$ and $k_{1} > k_2-1$: do nothing.
  \item $k_{1} \equiv 1\!\pmod{2}$ and $k_{1} < k_{2}-1$: swap the two columns.
  \item $k_{1} \equiv 0\!\pmod{2}$ and $k_{1} > k_{2}-1$: move one circle from the first column to the second.
  \item $k_{1} \equiv 0\!\pmod{2}$ and $k_{1} \leq k_{2}-1$: swap the two columns.
\end{enumerate}
We then apply the same adjustment to the first two rows of the diagram, with $k_4$ and $k_3$ playing the roles of $k_1$ and $k_2$, respectively, and we end up with a proper partition of $n+2$ with a Durfee square of size 2. For example, the permutation $\tau=1263457$ corresponds to the weak composition $w_\tau=(2,1,3,1)$ and leads to
\medskip
\begin{center}
\def\rad{0.13}
\begin{tikzpicture}[scale=1]
\node at (0,-0.6) {$(2,1,3,1)$};
\draw[thick,->] (1.3,-0.6) -- (1.9,-0.6);
\draw[thick,->] (8.6,-0.6) -- (9.2,-0.6);
\node at (10.5,-0.6) {$4+3+2$};
\begin{scope}[xshift=60]
\draw[fill=black!65] foreach \i in {1,2}{ (0.5*\i,0) circle (\rad)};
\draw[fill=black!65] foreach \i in {1,2}{ (0.5*\i,-0.5) circle (\rad)};
\draw[fill=gray!50] foreach \i in {2,3}{ (0.5,-0.5*\i) circle (\rad)};
\draw[fill=gray!50] foreach \i in {3,4}{ (0.5*\i,-0.5) circle (\rad)};
\draw[fill=gray!50] foreach \i in {3}{ (0.5*\i,0) circle (\rad)};
\draw[-stealth,gray] (0.7,-1.5) to[out=0,in=-90] (1,-1.1);
\draw[stealth-stealth,gray] (2.35,-0.5) to[out=20,in=-90] (2.6,-0.25) to[out=90,in=-10] (2.35,0);
\draw[thick,->] (3,-0.6) -- (3.6,-0.6);
\end{scope} 
\begin{scope}[xshift=170]
\draw[fill=black!65] foreach \i in {1,2}{ (0.5*\i,0) circle (\rad)};
\draw[fill=black!65] foreach \i in {1,2}{ (0.5*\i,-0.5) circle (\rad)};
\draw[fill=gray!50] foreach \i in {2}{ (0.5,-0.5*\i) circle (\rad)};
\draw[fill=gray!50] foreach \i in {2}{ (1,-0.5*\i) circle (\rad)};
\draw[fill=gray!50] foreach \i in {3}{ (0.5*\i,-0.5) circle (\rad)};
\draw[fill=gray!50] foreach \i in {3,4}{ (0.5*\i,0) circle (\rad)};
\end{scope}
\end{tikzpicture} 
\end{center}

The map is reversible since all cases are identifiable in the Ferrers diagram of the partition based on the parities of the first two columns and first two rows. The four procedures (i)--(iv) lead to the parities odd-even, even-odd, odd-odd, and even-even, respectively.
\end{proof}

\medskip
We end this section with an interesting interpretation for the elements of $\Gr^{odd}_{n}(3412)$.

A {\em multidigraph} is a directed graph that is allowed to have multiple edges (arcs and loops). For $n\ge 0$, let $\V_n$ be the set of multidigraphs on two nodes having $n$ edges. The elements of $\V_n$ for $n=0,1,2$ are illustrated in Table~\ref{tab:V_n_Examples}.
\begin{table}[ht]
\begin{tabular}{l||cl}
$n=0$\hspace{1ex}
&& 
\parbox[m]{.5\textwidth}{
\begin{tikzpicture}[scale=0.5]
\draw[fill] (0,0) circle (0.15);
\draw[fill] (2,0) circle (0.15);
\end{tikzpicture}
}
\\[6pt]\hline
$n=1$
&& 
\parbox[m]{.5\textwidth}{
\begin{tikzpicture}[scale=0.5, decoration={markings, mark= at position 0.6 with {\arr}}]
\begin{scope}
\draw[fill] (0,0) circle (0.15);
\draw[fill] (2,0) circle (0.15);
\draw[thick, scale=2.5] (0.8,0)  to[in=50,out=-50,loop] (0.8,0);
\end{scope}
\begin{scope}[xshift=160]
\draw[fill] (0,0) circle (0.15);
\draw[fill] (2,0) circle (0.15);
\draw[thick,postaction={decorate}] (0,0) -- (2,0);
\end{scope}
\end{tikzpicture}
}
\\ \hline
$n=2$ 
&&
\parbox[m]{.5\textwidth}{
\begin{tikzpicture}[scale=0.5,decoration={markings,mark= at position 0.6 with {\arr}}]
\draw[fill] (0,0) circle (0.15);
\draw[fill] (2,0) circle (0.15);
\draw[thick,postaction={decorate}] (2,0) to [in=35, out=145] (0,0);
\draw[thick,postaction={decorate}] (2,0) to [in=-35, out=-145] (0,0);
\begin{scope}[xshift=160]
\draw[fill] (0,0) circle (0.15);
\draw[fill] (2,0) circle (0.15);
\draw[thick,postaction={decorate}] (0,0) to [in=145, out=35] (2,0);
\draw[thick,postaction={decorate}] (2,0) to [in=-35, out=-145] (0,0);
\end{scope}
\begin{scope}[xshift=320]
\draw[fill] (0,0) circle (0.15);
\draw[fill] (2,0) circle (0.15);
\draw[thick, scale=2.5] (0.8,0) to[in=50,out=-50,loop] (0.8,0);
\draw[thick,postaction={decorate}] (0,0) -- (2,0);
\end{scope}
\end{tikzpicture}
}
\\[-6pt]
&& 
\begin{tikzpicture}[scale=0.5,decoration={markings,mark= at position 0.6 with {\arr}}]
\begin{scope}
\draw[fill] (0,0) circle (0.15);
\draw[fill] (2,0) circle (0.15);
\draw[thick, scale=2.5] (0.8,0) to[in=50, out=-50, loop] (0.8,0);
\draw[thick,postaction={decorate}] (2,0) -- (0,0);
\end{scope}
\begin{scope}[xshift=160]
\draw[fill] (0,0) circle (0.15);
\draw[fill] (2,0) circle (0.15);
\draw[thick, scale=2.5] (0.8,0) to[in=50, out=-50, loop] (0.8,0);
\draw[thick, scale=2.5] (0,0) to[in=130,out=-130,loop] (0,0);
\end{scope}
\begin{scope}[xshift=320]
\draw[fill] (0,0) circle (0.15);
\draw[fill] (2,0) circle (0.15);
\draw[thick, scale=2.5] (0.8,0) to[in=110,out=10,loop] (0.8,0);
\draw[thick, scale=2.5] (0.8,0) to[in=-110,out=-10,loop] (0.8,0);
\end{scope}
\end{tikzpicture} 
\end{tabular}
\bigskip
\caption{Elements of $\V_0$, $\V_1$, and $\V_2$.} \label{tab:V_n_Examples}
\end{table}

If we think of the two nodes as `left node' and `right node', then every $G\in \V_n$ can be identified with a weak composition $\hat t_G = (t_1,t_2,t_3,t_4)\in \W_4(n)$, where  
\begin{itemize}
\item[] $t_{1}=$ number of loops on the left node,
\item[] $t_{2}=$ number of edges that go from the left node to the right,
\item[] $t_{3}=$ number of edges that go from the right node to the left,
\item[] $t_{4}=$ number of loops on the right node.
\end{itemize}
However, since the elements of $\V_n$ are invariant under rotation, $G$ could also be represented by the weak composition $\hat t'_G = (t_4,t_3,t_2,t_1)$. Therefore, we define the equivalence relation
\[ (t_1,t_2,t_3,t_4) \sim (t_4,t_3,t_2,t_1), \]
and let $\P_{n} = \W_4(n)/\!\sim$ be the set of equivalence classes. For instance, 
\[ \P_0 = \{[(0,0,0,0)]\} \text{ and } \P_1 = \{[(1,0,0,0)],[(0,1,0,0)]\}. \]
By definition, the elements of $\V_n$ are in one-to-one correspondence with those of $\P_{n}$.

\begin{prop}\label{prop:oddtoP}
For $n\ge 2$, the set $\Gr^{odd}_{n}(3412)$ is in bijection with the set $\P_{n-2}$.
\end{prop}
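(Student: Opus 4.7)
The plan is to reduce the problem to a statement about weak compositions using the bijection $\Lambda$ introduced before Theorem~\ref{thm:3412oddGrass}, and then construct a bijection between this subset of compositions and the orbit set $\P_{n-2}$.

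The map $\Lambda:\Gr^*_n(3412)\to\W_4(n-2)$ is a bijection, and the parity criterion stated just before Theorem~\ref{thm:3412oddGrass} tells us that $\tau\in\Gr^*_n(3412)$ is odd if and only if $\Lambda(\tau)=(t_1,t_2,t_3,t_4)$ satisfies $t_2\equiv t_3\!\pmod{2}$. Hence $\Lambda$ restricts to a bijection from $\Gr^{odd}_n(3412)$ onto
\[ O_{n-2}:=\{(t_1,t_2,t_3,t_4)\in\W_4(n-2):t_2\equiv t_3\!\pmod{2}\}, \]
and it suffices to produce a bijection $O_{n-2}\to\P_{n-2}$.

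The natural candidate is the quotient map by the involution $\iota(t_1,t_2,t_3,t_4)=(t_4,t_3,t_2,t_1)$ whose orbits are the elements of $\P_{n-2}$; however, $O_{n-2}$ is itself $\iota$-closed, so two distinct elements of $O_{n-2}$ in the same $\iota$-orbit would collide. The plan is to split $O_{n-2}=F_{n-2}\sqcup U_{n-2}$, where $F_{n-2}$ consists of the $\iota$-fixed points (tuples with $t_1=t_4$ and $t_2=t_3$), sending each fixed point to its singleton class in $\P_{n-2}$, and sending the two representatives of each orbit in $U_{n-2}$ to two distinct classes of $\P_{n-2}$. Concretely, I would map the lexicographically smaller representative of each orbit in $U_{n-2}$ to its own class under the natural quotient, and map the lex-larger representative to the $\iota$-class of a tuple obtained by a local shift such as $(t_1,t_2,t_3,t_4)\mapsto(t_1,t_2+1,t_3,t_4-1)$. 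Such a shift preserves the total sum while flipping the parity of $t_2\!\pmod{2}$, so its $\iota$-orbit lies in the complement $\W_4(n-2)\setminus O_{n-2}$, giving access to exactly the classes of $\P_{n-2}$ not already used.

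The hard part will be making the local shift well defined globally, since it can create a negative coordinate when a relevant $t_i$ vanishes (for instance $t_4=0$ in the shift above). A short case analysis on which coordinates equal zero dictates an alternative rule for a small collection of exceptional tuples, which must themselves be matched bijectively to the corresponding exceptional classes of $\P_{n-2}\setminus O_{n-2}/\iota$. To verify that the resulting map is a bijection I would either write down its inverse directly or combine injectivity with the cardinality identity $|O_{n-2}|=|\P_{n-2}|$. The latter follows from Theorem~\ref{thm:3412oddGrass} together with Burnside's lemma applied to $\iota$, which yields $|\P_{n-2}|=\tfrac12(|\W_4(n-2)|+|F_{n-2}|)$ with $|F_{n-2}|=n/2$ if $n$ is even and $0$ if $n$ is odd, matching the formulas for $|\Gr^{odd}_n(3412)|$ from Theorem~\ref{thm:3412oddGrass}.
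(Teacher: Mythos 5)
Your first step (reduce via $\Lambda$ to the set $O_{n-2}$ of weak compositions with $t_2\equiv t_3\pmod 2$) is the same as the paper's, and your fallback observation is correct and in fact already proves the statement as literally written: Burnside's lemma for the involution $(t_1,t_2,t_3,t_4)\mapsto(t_4,t_3,t_2,t_1)$ gives $|\P_{n-2}|=\tfrac12\bigl(\binom{n+1}{3}+|F_{n-2}|\bigr)$ with $|F_{n-2}|=n/2$ for $n$ even and $0$ for $n$ odd, and this matches $|\Gr^{odd}_{n}(3412)|=|O_{n-2}|$ as given by Theorem~\ref{thm:3412oddGrass}. Since both sets are finite, equinumerosity yields a bijection. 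This is a genuinely different route from the paper, which never counts: it builds an explicit map $\xi$ from $\Lambda$ and the parity-switching map $\phi$ of Theorem~\ref{thm:3412oddGrass}. What the counting route loses is exactly what the section is after, namely a concrete correspondence (the paper's $\xi$ is what underlies the multidigraph interpretation that follows); what it buys is brevity, at the cost of depending on the enumeration in Theorem~\ref{thm:3412oddGrass}.

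The explicit-bijection half of your plan, however, has a real gap as written. Sending fixed points and lex-minimal representatives to their own classes does account bijectively for the classes meeting $O_{n-2}$, and your shift does flip the parity of $t_2$ and hence lands in the complementary classes; but the remaining work is not ``a short case analysis on a small collection of exceptional tuples.'' The orbits whose lex-larger representative has $t_4=0$ (where your shift breaks down) number on the order of $n^2$, and injectivity after passing to classes is not automatic: two distinct lex-larger representatives can have shifted images lying in the same $\iota$-class, so such collisions must be ruled out alongside the exceptional rule. This deferred analysis is precisely the bulk of the paper's proof: $\phi$ is defined with three cases exactly to stay nonnegative and reversible, the symmetric tuples and the set $\A_n$ require separate treatment that differs according to $n\bmod 4$, and Lemmas~\ref{lem:symmetric} and~\ref{lem:maxmin} are needed to pin down the inverse. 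So either commit to the counting argument (complete but non-constructive), or expect the explicit construction to require essentially the same amount of case work as the paper's.
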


\begin{proof}
For $u=(u_1,u_2,u_3,u_4)$, we let $u'=(u_4,u_3,u_2,u_1)$ and write $u\prec u'$ if either $u_1<u_4$ or $u_1=u_4$ and $u_2<u_3$. We say that $u$ is {\em minimal} if $u\prec u'$. Clearly, if $u\not= u'$, then $u$ is minimal if and only if $u'$ is not. An element $u$ is called {\em symmetric} if $u = u'$.

We will use the maps $\Lambda$ and $\phi$ introduced in the proof of Theorem~\ref{thm:3412oddGrass} to define a bijective map $\xi:\Gr^{odd}_{n}(3412)\to \P_{n-2}$. Recall that $\phi$ is only defined on the sets $\Lambda\big(\Gr^{odd}_{2m-1}(3412)\big)$ and $\Lambda\big(\Gr^{odd}_{2m}(3412)\backslash \A_{2m}\big)$. Also recall that the elements of $\A_{2m}$ are of the form $\tau = \tau_2\, (2m) 1\, \tau_3$ with $|\tau_2|\equiv|\tau_3|\equiv 0\!\pmod{2}$, so $\Lambda(\tau)=(0,|\tau_2|,|\tau_3|,0)$.

Let $\tau\in \Gr^{odd}_{n}(3412)$ and $\hat t = \Lambda(\tau)$. If $\hat t$ is minimal or if $\hat t'$ is minimal and $\phi(\hat t)$ is defined, we let
\begin{equation*}
 \xi(\tau) = \begin{cases}
 [\,\hat t\,] &\text{if $\hat t$ is minimal}, \\
 [\phi(\hat t)] &\text{if $\hat t'$ is minimal}.
 \end{cases}
\end{equation*}
Otherwise, either $\hat t$ is symmetric or $\hat t'$ is minimal but $\phi(\hat t)$ is undefined (which means $\tau\in\A_n$). In both cases, $n$ must be even and $t_1+t_2+t_3+t_4=n-2$.

If $n\equiv 0\!\pmod4$ and $\hat t$ is symmetric, say $\hat t=(t_1,t_2,t_2,t_1)$, then we let
\begin{equation*}
 \xi(\tau) = \begin{cases}
 [\,\hat t\,] &\text{if $t_1$ is even}, \\
 [\phi(\hat t)] &\text{if $t_1$ is odd}.
 \end{cases}
\end{equation*}
On the other hand, if $\tau\in\A_n$ and $\hat t=\Lambda(\tau)$ is not minimal, then $\hat t = (0,t_2,t_3,0)$ with $t_2>t_3$ and $t_2\equiv t_3\equiv 0\!\pmod2$. In this case, we let
\begin{equation*}
 \xi(\tau) = \begin{cases}
 \left[\left(\tfrac{t_2}{2},\tfrac{t_3}{2},\tfrac{t_3}{2},\tfrac{t_2}{2}\right)\right] &\text{if $t_2/2$ is odd}, \\
 \left[\left(\tfrac{t_3}{2},\tfrac{t_2}{2},\tfrac{t_2}{2},\tfrac{t_3}{2}\right)\right] &\text{if $t_2/2$ is even}.
 \end{cases}
\end{equation*}

If $n\equiv 2\!\pmod4$ and $\hat t=(t_1,t_2,t_2,t_1)$ (i.e.\ symmetric), we let
\begin{equation*}
 \xi(\tau) = \begin{cases}
 [\,\hat t\,] &\text{if $t_1$ is odd or $t_1=0$}, \\
 [\phi(\hat t)] &\text{if $t_1$ is even, $t_1>0$}.
 \end{cases}
\end{equation*}
Finally, if $\tau\in\A_n$ and $\hat t=\Lambda(\tau)$ is neither minimal nor symmetric, then we let
\begin{equation*}
 \xi(\tau) = \begin{cases}
 \left[\left(\tfrac{t_2}{2},\tfrac{t_3}{2},\tfrac{t_3}{2},\tfrac{t_2}{2}\right)\right] &\text{if $t_2/2$ is even}, \\
 \left[\left(\tfrac{t_3}{2}+1,\tfrac{t_2}{2}-1,\tfrac{t_2}{2}-1,\tfrac{t_3}{2}+1\right)\right] &\text{if $t_2/2$ is odd}.
 \end{cases}
\end{equation*}

For every $n$, the map $\xi$ sends the elements of $\Gr^{odd}_{n}(3412)$ into three distinguishable disjoint subsets of $\P_{n-2}$ whose elements can be recognized by their symmetry properties and the parities of their components. Observe that if $\tau$ is odd and $\hat t=\Lambda(\tau)$, then $t_2\equiv t_3\pmod{2}$ and therefore $\phi(\hat t)_2\not\equiv \phi(\hat t)_3\pmod{2}$.

The inverse map $\xi^{-1}:\P_{n-2}\to \Gr^{odd}_{n}(3412)$ is defined as follows. Let $[u] = [(u_1,u_2,u_3,u_4)]$ be an equivalence class in $\P_{n-2}$. 
\begin{enumerate}[\quad(i)]
  \item If $u$ is not symmetric and $u_2 \equiv u_3\!\pmod{2}$, choose the representative of $[u]$ that is minimal, call it $\hat t_{\min}$, and let 
  \[  \xi^{-1}([u]) = \Lambda^{-1}(\hat t_{\min}). \]
  \item If $u_2 \not\equiv u_3$, consider $\phi^{-1}(u)$ and $\phi^{-1}(u')$. By Lemma~\ref{lem:symmetric}, they both have the same symmetry properties. If they are both not symmetric, Lemma~\ref{lem:maxmin} implies that one and only one of them must be minimal, say $\hat t_{\min} = \phi^{-1}(u)$. We then denote $\hat t_{\max} = \phi^{-1}(u')$ and let
  \[  \xi^{-1}([u]) = \Lambda^{-1}(\hat t_{\max}). \]
\smallskip
If $\phi^{-1}(u)$ and $\phi^{-1}(u')$ are both symmetric and $u\prec u'$, then
  \[  \xi^{-1}([u]) = \Lambda^{-1}(\phi^{-1}(u')). \]
\item If $u$ is symmetric and $\phi(u)$ is minimal, then
  \[  \xi^{-1}([u]) = \Lambda^{-1}(u). \]
If $u$ is symmetric and $\phi(u)$ is not minimal (or undefined), we consider two cases:
\begin{enumerate}[\quad(a)]
 \item If $n\equiv0\!\pmod4$, then $u = (2k+1,2\ell,2\ell,2k+1)$. We then let
 \begin{equation*}
 \xi^{-1}([u]) = \begin{cases}
 \Lambda^{-1}(0,4k+2,4\ell,0) &\text{ if } k\ge\ell, \\
 \Lambda^{-1}(0,4\ell,4k+2,0) &\text{ if } k<\ell.
 \end{cases}
 \end{equation*} 
 \item If $n\equiv2\!\pmod4$, then $u = (2k,2\ell,2\ell,2k)$. In this case, we let
 \begin{equation*}
 \xi^{-1}([u]) = \begin{cases}
 \Lambda^{-1}(0,4k,4\ell,0) &\text{ if } k>\ell, \\
 \Lambda^{-1}(0,4\ell+2,4k-2,0) &\text{ if } 0<k\le \ell, \\
 \Lambda^{-1}(0,2\ell,2\ell,0) &\text{ if } k=0.
 \end{cases}
 \end{equation*} 
\end{enumerate}
\end{enumerate}
\end{proof}
The bijective map $\xi$ is illustrated in Table~\ref{tab:xi_mapExample} for $n=6$. The elements in blue (marked with $*$) are the ones obtained through the map $\phi$. The element in red (marked with $\star$) is the one obtained through the process for the permutations $\tau\in\A_n$ for which $\Lambda(\tau)$ is neither minimal nor symmetric.

\begin{table}[ht]
\renewcommand{\arraystretch}{1.3}
\begin{tabular}{c | c | c | l}
$\tau\in \Gr_{6}^{odd}(3412)$ &  $\hat t=\Lambda(\tau)$ & Property & \;$\xi(\tau)\in \P_{4}$\; \\
\hline\hline
\R $1\, 2\, 3\, 4\, 6\, 5$ & $(4, 0, 0, 0)$ & $\hat t'$ minimal & \color{blue} $[(3,1,0,0)]*$\\ 
$1\, 2\, 3\, 5\, 4\, 6$ & $(3, 0, 0, 1)$ & $\hat t'$ minimal & \color{blue} $[(3,0,1,0)]*$\\ 
$1\, 2\, 4\, 3\, 5\, 6$ & $(2, 0, 0, 2)$ & symm., $t_1>0$ even & \color{blue} $[(2,0,1,1)]*$\\ 
$1\, 2\, 4\, 5\, 6\, 3$ & $(2, 2, 0, 0)$ & $\hat t'$ minimal & \color{blue} $[(1,3,0,0)]*$\\ 
$1\, 2\, 4\, 6\, 3\, 5$ & $(2, 1, 1, 0)$ & $\hat t'$ minimal & \color{blue} $[(2,1,0,1)]*$\\ 
$1\, 2\, 6\, 3\, 4\, 5$ & $(2, 0, 2, 0)$ & $\hat t'$ minimal &\color{blue} $[(1,1,2,0)]*$\\ 
$1\, 3\, 2\, 4\, 5\, 6$ & $(1, 0, 0, 3)$ & minimal & $[(1,0,0,3)]$\\ 
$1\, 3\, 4\, 5\, 2\, 6$ & $(1, 2, 0, 1)$ & $\hat t'$ minimal &\color{blue}$[(1,2,1,0)]*$\\ 
$1\, 3\, 5\, 2\, 4\, 6$ & $(1, 1, 1, 1)$ & symm., $t_1$ odd & $[(1, 1, 1, 1)]$\\ 
$1\, 5\, 2\, 3\, 4\, 6$ & $(1, 0, 2, 1)$ & minimal & $[(1, 0, 2, 1)]$\\ 
$2\, 1\, 3\, 4\, 5\, 6$ & $(0, 0, 0, 4)$ & minimal & $[(0, 0, 0, 4)]$\\ 
$2\, 3\, 4\, 1\, 5\, 6$ & $(0, 2, 0, 2)$ & minimal & $[(0, 2, 0, 2)]$\\ 
$2\, 3\, 4\, 5\, 6\, 1$ & $(0, 4, 0, 0)$ & not min., $\tau\in\A_6$ &\color{black!30!red} $[(2,0,0,2)]\star$\\ 
$2\, 3\, 4\, 6\, 1\, 5$ & $(0, 3, 1, 0)$ & $\hat t'$ minimal &\color{blue} $[(0,3,0,1)]*$\\ 
$2\, 3\, 6\, 1\, 4\, 5$ & $(0, 2, 2, 0)$ & symm., $t_1=0$ & $[(0, 2, 2, 0)]$\\ 
$2\, 4\, 1\, 3\, 5\, 6$ & $(0, 1, 1, 2)$ & minimal & $[(0, 1, 1, 2)]$\\ 
$2\, 6\, 1\, 3\, 4\, 5$ & $(0, 1, 3, 0)$ & minimal & $[(0, 1, 3, 0)]$\\ 
$4\, 1\, 2\, 3\, 5\, 6$ & $(0, 0, 2, 2)$ & minimal & $[(0, 0, 2, 2)]$\\ 
$6\, 1\, 2\, 3\, 4\, 5$ & $(0, 0, 4, 0)$ & minimal & $[(0, 0, 4, 0)]$
\end{tabular}
\medskip
\caption{Bijective map $\xi:\Gr_{6}^{odd}(3412)\to \P_4$.} \label{tab:xi_mapExample}
\end{table}

\begin{lem}\label{lem:symmetric}
Let $u,v\in \W_4(n-2)$. If $\phi(u)\sim\phi(v)$ and $u$ is symmetric, then $v$ is symmetric.
\end{lem}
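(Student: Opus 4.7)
The plan is to invert $\phi$ explicitly and then check directly that the preimage of $\phi(u)'$ under $\phi$ is automatically symmetric whenever $u$ is symmetric.

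First, I would observe that $\phi$ is injective on its domain: given $s = (s_1, s_2, s_3, s_4)$, the parity of $s_3$ together with whether $s_4 = 0$ determines uniquely which of the three clauses defining $\phi$ could have produced $s$. Explicitly, if $s_3$ is odd then the unique preimage is $(s_1, s_2, s_3 - 1, s_4 + 1)$; if $s_3$ is even and $s_4 > 0$ then it is $(s_1, s_2, s_3 + 1, s_4 - 1)$; and if $s_3$ is even with $s_4 = 0$ it is $(s_1 + 1, s_2 - 1, s_3, 0)$.

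Next, I would write $u = (a, b, b, a)$ and compute $\phi(u)$ and $\phi(u)'$ in two cases. When $b$ is even (so $a \ge 1$ is needed for $\phi(u)$ to be defined), one gets $\phi(u) = (a, b, b+1, a-1)$ and therefore $\phi(u)' = (a-1, b+1, b, a)$. When $b$ is odd, $\phi(u) = (a, b, b-1, a+1)$ and $\phi(u)' = (a+1, b-1, b, a)$. In particular, $\phi(u) \neq \phi(u)'$ in both cases. Since $\phi(u) \sim \phi(v)$ means $\phi(v) \in \{\phi(u), \phi(u)'\}$, injectivity of $\phi$ forces either $v = u$ (symmetric by hypothesis) or $v = \phi^{-1}(\phi(u)')$. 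Applying the inverse formula to $\phi(u)'$ yields $v = (a-1, b+1, b+1, a-1)$ when $b$ is even (using the clause $s_3 = b$ even, $s_4 = a > 0$) and $v = (a+1, b-1, b-1, a+1)$ when $b$ is odd (using the clause $s_3 = b$ odd). Both outputs are manifestly symmetric.

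The main obstacle is the bookkeeping of domain constraints for $\phi$: one must confirm that $\phi(u)$ is defined (which rules out $u = (0, b, b, 0)$ with $b$ even, where the hypothesis is vacuous) and that the resulting entries of $v$ remain non-negative so that $v \in \W_4(n-2)$. Once this is handled, the case analysis is exhaustive and the symmetry of $v$ follows by direct inspection of components.
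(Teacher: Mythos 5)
Your proposal is correct and follows essentially the same route as the paper: write the symmetric $u=(a,b,b,a)$, split on the parity of $b$, compute $\phi(u)$ and its reversal, and invert $\phi$ to see the preimage is again symmetric. The only differences are presentational — the paper parametrizes the cases via $n\bmod 4$ and handles the odd case ``by symmetry,'' while you make the clause-by-clause inverse of $\phi$ and the degenerate case $u=(0,b,b,0)$ (where $\phi(u)$ is undefined) explicit, which is a harmless bit of extra care.
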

\begin{proof}
Suppose $u_2 = u_3 \equiv 0\!\pmod2$. If $n\equiv0\pmod4$, then $u = (2k+1,2\ell,2\ell,2k+1)$ for some $k,\ell\geq0$. So, $\phi(u) = (2k+1,2\ell,2\ell+1,2k)$ and $\phi(v) = (2k,2\ell+1,2\ell,2k+1)$. Therefore, $v = (2k,2\ell+1,2\ell+1,2k)$. If $n\equiv2\pmod4$, $u$ can be written as $u = (2k,2\ell,2\ell,2k)$ where $k\geq1$ and $\ell\geq0$, leading to $v = (2k-1,2\ell+1,2\ell+1,2k-1)$. The argument when $u_2 = u_3\equiv 1\!\pmod2$ is similar and can be seen as going from $v$ to $u$.
\end{proof}

\begin{lem}\label{lem:maxmin}
Let $u,v\in \W_4(n-2)$, not symmetric. If $\phi(u)\sim\phi(v)$ and $u$ is not minimal, then $v$ is minimal. Similarly, if $\phi(u)\sim\phi(v)$ and $u$ is minimal, then $v$ is not minimal.
\end{lem}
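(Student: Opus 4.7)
The plan is to unfold the relation $\phi(u)\sim\phi(v)$ into an explicit formula for $v$ in terms of $u$ and then directly verify the alternation of minimality. Since $\phi$ is injective on its domain (as used in the proof of Theorem~\ref{thm:3412oddGrass}), the equality $\phi(u)=\phi(v)$ would force $u=v$, making the hypothesis and conclusion of the lemma incompatible; hence I may restrict to the case $\phi(v)=\phi(u)'$, which uniquely determines $v=\phi^{-1}(\phi(u)')$.

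The next step is a case analysis: three branches of $\phi$ can apply to $u$ and three branches of $\phi^{-1}$ can apply to $\phi(u)'$, giving nine a~priori subcases. The parity constraint $u_2\equiv u_3\pmod2$, which holds because $u=\Lambda(\tau)$ for an odd $\tau\in\Gr_n(3412)$, pins down the parity of the third coordinate of $\phi(u)'$; combined with the zero-entry requirements built into the branches of $\phi^{-1}$, this eliminates five of the nine subcases. The four surviving scenarios give
\begin{itemize}
\item[(A)] $u_3$ even, $u_1=0$, $u_4\ge 1$:\quad $v=(u_4,u_3,u_2,0)$;
\item[(B)] $u_3$ even, $u_1\ge 1$, $u_4\ge 1$:\quad $v=(u_4-1,u_3+1,u_2+1,u_1-1)$;
\item[(C)] $u_3$ even, $u_1\ge 1$, $u_4=0$:\quad $v=(0,u_3,u_2,u_1)$;
\item[(D)] $u_3$ odd:\quad $v=(u_4+1,u_3-1,u_2-1,u_1+1)$.
\end{itemize}
Scenarios (A) and (C) are mutually inverse under the correspondence $u\mapsto v$, as are (B) and (D); this is a convenient sanity check.

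Finally, I would verify the alternation of minimality in each scenario directly from the definition. In (A) and (C), the extreme entries of $u$ and $v$ already force the strict inequalities $u_1<u_4$, $v_1>v_4$ (or vice versa), so exactly one of $u,v$ is minimal regardless of the remaining coordinates. In (B) and (D), direct substitution gives $v_1<v_4\iff u_4<u_1$ and, in the boundary case $v_1=v_4$, $v_2<v_3\iff u_3<u_2$. Since $u$ is not symmetric (so $u_1\ne u_4$ or $u_2\ne u_3$), precisely one of these conditions and its analogue for $u$ can hold, as required. The second assertion of the lemma then follows by swapping the roles of $u$ and $v$.

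The main obstacle is bookkeeping: each branch of $\phi$ and $\phi^{-1}$ carries side conditions on both parity and on whether certain entries vanish, so it is easy to overlook a surviving subcase or include an empty one. The parity tracking forced by $u_2\equiv u_3\pmod2$ is the mechanism that collapses nine potential subcases to four, and it must be handled with care.
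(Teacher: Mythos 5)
Your proposal is correct and takes essentially the same route as the paper: both unfold the branches of $\phi$ and $\phi^{-1}$, use the parity constraint $u_2\equiv u_3\pmod{2}$ (equivalently $\phi(u)_2\not\equiv\phi(u)_3$, which the paper's proof assumes) to prune the case analysis, and verify the alternation of minimality by direct substitution --- your four source-side scenarios are exactly the paper's image-side cases, of which the paper only writes out one in detail. The only cosmetic difference is that you dispose of the degenerate possibility $\phi(v)=\phi(u)$ explicitly via injectivity of $\phi$, which the paper leaves implicit.
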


\begin{proof}
Let $\phi(u) = (u_1,u_2,u_3,u_4)$ and $\phi(v) = (u_4,u_3,u_2,u_1)$. Since $u_2 \not\equiv u_3\!\pmod 2$, there are three cases to consider, $u_3\equiv1$, $u_3\equiv0$ with $u_4>0$, and $u_3\equiv0$ with $u_4=0$. If $u_3\equiv1$, then $u = (u_1,u_2,u_3-1,u_4+1)$ and, for $u$ not to be minimal, we must have either $u_1>u_4+1$ or $u_1=u_4+1$ and $u_2>u_3-1$. This guarantees $u_1>1$, so $v = (u_4,u_3,u_2+1,u_1-1)$. Thus, $v$ is minimal. The other cases are similar and follow directly from the definitions.
\end{proof}

\begin{cor}
The elements of $\Gr^{odd}_{n}(3412)$ are in one-to-one correspondence with the multidigraphs on two nodes having $n-2$ edges.
\end{cor}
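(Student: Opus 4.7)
The corollary is an immediate consequence of Proposition~\ref{prop:oddtoP} combined with the correspondence between $\V_{n-2}$ and $\P_{n-2}$ already established in the discussion preceding that proposition. The plan is therefore simply to observe that composing the two bijections yields the desired result.

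More concretely, I would first recall that, as explained in the paragraph just before Proposition~\ref{prop:oddtoP}, every multidigraph $G\in\V_{n-2}$ is determined by the four nonnegative integers $(t_1,t_2,t_3,t_4)$ counting, respectively, the loops at the left node, the left-to-right edges, the right-to-left edges, and the loops at the right node. This gives an identification $\V_{n-2}\leftrightarrow\W_4(n-2)$, and passing to the quotient by the rotation symmetry $(t_1,t_2,t_3,t_4)\sim(t_4,t_3,t_2,t_1)$ identifies $\V_{n-2}$ with $\P_{n-2}=\W_4(n-2)/\!\sim$. I would then invoke Proposition~\ref{prop:oddtoP}, which supplies the bijection $\xi:\Gr^{odd}_{n}(3412)\to\P_{n-2}$. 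Composing the inverse of the first identification with $\xi$ produces the claimed one-to-one correspondence $\Gr^{odd}_{n}(3412)\to\V_{n-2}$.

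Since both ingredients are already in place, there is no genuine obstacle; the only thing worth stating explicitly in the write-up is the composed map. No additional parity or symmetry checks are needed, since they were absorbed into the construction of $\xi$ in the proof of Proposition~\ref{prop:oddtoP}.
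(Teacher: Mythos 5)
Your proposal is correct and matches the paper's reasoning exactly: the corollary is stated without proof precisely because it follows by composing the bijection $\xi$ of Proposition~\ref{prop:oddtoP} with the identification of $\V_{n-2}$ with $\P_{n-2}$ given just before that proposition. Nothing further is needed.
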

For $n=5$, the above correspondence is illustrated in Figure~\ref{fig:multidigraphs}.

\begin{figure}[ht]
\begin{tikzpicture}[scale=0.7,decoration={markings,mark= at position 0.55 with {\arr}}]
\begin{scope}
\draw[fill] (0,0) circle (0.12);
\draw[fill] (2,0) circle (0.12);
\draw[thick,postaction={decorate}] (0,0) to [in=145, out=35] (2,0);
\draw[thick,postaction={decorate}] (0,0) to [in=-145, out=-35] (2,0);
\draw[thick,postaction={decorate}] (0,0) -- (2,0);
\node at (1.1,-1.3) {\small\color{blue} $[(0,3,0,0)]*$};
\node at (1,-2.1) {1 3 4 5 2};
\end{scope}
\begin{scope}[xshift=115]
\draw[fill] (0,0) circle (0.12);
\draw[fill] (2,0) circle (0.12);
\draw[thick,postaction={decorate}] (2,0) to [in=35, out=145] (0,0);
\draw[thick,postaction={decorate}] (2,0) to [in=-35, out=-145] (0,0);
\draw[thick,postaction={decorate}] (0,0) -- (2,0);
\node at (1.1,-1.3) {\small\color{blue} $[(0,1,2,0)]*$};
\node at (1,-2.1) {1 5 2 3 4};
\end{scope}
\begin{scope}[xshift=230]
\draw[fill] (0,0) circle (0.12);
\draw[fill] (2,0) circle (0.12);
\draw[thick,postaction={decorate}] (2,0) -- (0,0);
\draw[thick, scale=2] (0,0)  to[in=85,out=175,loop] (0,0);
\draw[thick, scale=2] (0,0)  to[in=185,out=275,loop] (0,0);
\node at (1.1,-1.3) {\small\color{blue} $[(2,0,1,0)]*$};
\node at (1,-2.1) {1 2 4 3 5};
\end{scope}
\begin{scope}[xshift=345]
\draw[fill] (0,0) circle (0.12);
\draw[fill] (2,0) circle (0.12);
\draw[thick,postaction={decorate}] (0,0) -- (2,0);
\draw[thick, scale=2] (0,0)  to[in=85,out=175,loop] (0,0);
\draw[thick, scale=2] (0,0)  to[in=185,out=275,loop] (0,0);
\node at (1.1,-1.3) {\small\color{blue} $[(2,1,0,0)]*$};
\node at (1,-2.1) {1 2 3 5 4};
\end{scope}
\begin{scope}[xshift=460]
\draw[fill] (0,0) circle (0.12);
\draw[fill] (2,0) circle (0.12);
\draw[thick,postaction={decorate}] (0,0) -- (2,0);
\draw[thick, scale=2] (0,0)  to[in=130,out=-130,loop] (0,0);
\draw[thick, scale=2] (1,0)  to[in=50,out=-50,loop] (1,0);
\node at (1.1,-1.3) {\small\color{blue} $[(1,1,0,1)]*$};
\node at (1,-2.1) {1 3 5 2 4};
\end{scope}
\end{tikzpicture}

\smallskip

\begin{tikzpicture}[scale=0.7,decoration={markings,mark= at position 0.55 with {\arr}}]
\begin{scope}
\draw[fill] (0,0) circle (0.12);
\draw[fill] (2,0) circle (0.12);
\draw[thick,postaction={decorate}] (2,0) to [in=35, out=145] (0,0);
\draw[thick,postaction={decorate}] (2,0) to [in=-35, out=-145] (0,0);
\draw[thick, scale=2] (1,0)  to[in=50,out=-50,loop] (1,0);
\node at (1,-1.3) {\small $[(0,0,2,1)]$};
\node at (1,-2.1) {4 1 2 3 5};
\end{scope}
\begin{scope}[xshift=115]
\draw[fill] (0,0) circle (0.12);
\draw[fill] (2,0) circle (0.12);
\draw[thick,postaction={decorate}] (0,0) to [in=145, out=35] (2,0);
\draw[thick,postaction={decorate}] (2,0) to [in=-35, out=-145] (0,0);
\draw[thick, scale=2] (1,0)  to[in=50,out=-50,loop] (1,0);
\node at (1,-1.3) {\small $[(0,1,1,1)]$};
\node at (1,-2.1) {2 4 1 3 5};
\end{scope}
\begin{scope}[xshift=230]
\draw[fill] (0,0) circle (0.12);
\draw[fill] (2,0) circle (0.12);
\draw[thick,postaction={decorate}] (0,0) to [in=145, out=35] (2,0);
\draw[thick,postaction={decorate}] (0,0) to [in=-145, out=-35] (2,0);
\draw[thick, scale=2] (1,0)  to[in=50,out=-50,loop] (1,0);
\node at (1,-1.3) {\small $[(0,2,0,1)]$};
\node at (1,-2.1) {2 3 4 1 5};
\end{scope}
\begin{scope}[xshift=345]
\draw[fill] (0,0) circle (0.12);
\draw[fill] (2,0) circle (0.12);
\draw[thick, scale=2] (0,0)  to[in=130,out=-130,loop] (0,0);
\draw[thick, scale=2] (1,0)  to[in=95,out=5,loop] (1,0);
\draw[thick, scale=2] (1,0)  to[in=-95,out=-5,loop] (1,0);
\node at (1,-1.3) {\small $[(1,0,0,2)]$};
\node at (1,-2.1) {1 3 2 4 5};
\end{scope}
\begin{scope}[xshift=460]
\draw[fill] (0,0) circle (0.12);
\draw[fill] (2,0) circle (0.12);
\draw[thick, scale=2] (1,0)  to[in=45,out=135,loop] (1,0);
\draw[thick, scale=2] (1,0)  to[in=-45,out=-135,loop] (1,0);
\draw[thick, scale=2] (1,0)  to[in=45,out=-45,loop] (1,0);
\node at (1,-1.3) {\small $[(0,0,0,3)]$};
\node at (1,-2.1) {2 1 3 4 5};
\end{scope}
\end{tikzpicture}
\caption{Multidigraphs corresponding to the elements of $\Gr^{odd}_{5}(3412)$.}
\label{fig:multidigraphs}
\end{figure}
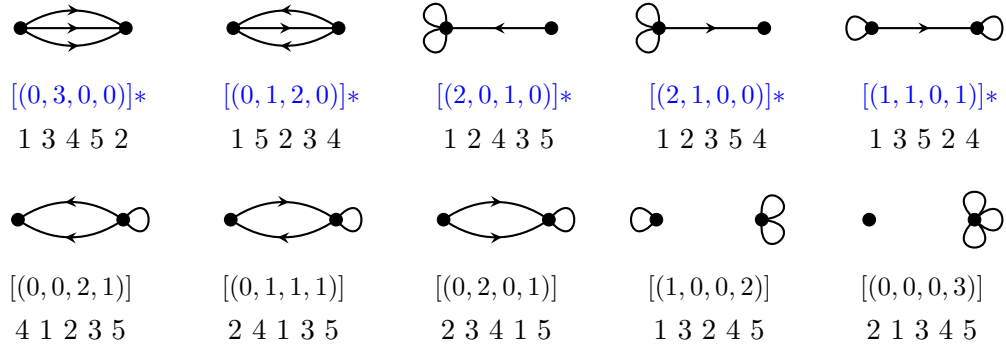

\section*{Acknowledgement}
We are very grateful to the Altoona Summer Undergraduate Research Fellowship that made it possible for Tomasko to dedicate herself to this project during the summer of 2021. 



\begin{thebibliography}{9}
\bibitem{GiTo22a} J.~B.~Gil and J.~A.~Tomasko, Restricted Grassmannian permutations, \emph{Enum. Combin. Appl.} \textbf{2} (2022), no.~4, Article \#S4PP6.
%
\bibitem{Kitaev11} S.~Kitaev, \emph{Patterns in permutations and words}, Monographs in Theoretical Computer Science, an EATCS Series, Springer, Heidelberg, 2011.
%
\bibitem{Sloane} N. J. A.~Sloane, The On-Line Encyclopedia of Integer Sequences, \url{http://oeis.org}.
%
\bibitem{StanEC1} R.~P.~Stanley, {\em Enumerative combinatorics, Vol. 1}, Cambridge Studies in Advanced Mathematics, 49, Cambridge University Press, Cambridge, 1997.
%
\bibitem{SylvFran} J.~J.~Sylvester and F.~Franklin, A constructive theory of partitions, arranged in three acts, an interact and an exodion, \emph{Amer. J. Math.} \textbf{5} (1882), no. 1-4, 251--330.
\end{thebibliography}
\end{document}